\renewcommand{\d}{\partial}
\def\d{\Omega}
\def\du#1#2#3{\overset{#3}{\underset{#2}{#1}}}
\def\Forall{\quad \hbox{ for all }}
\def\M{{\mathcal{M}}}
\def\B{{\mathcal{B}}}
\newcommand{\tn}[1]{\lVert\kern-1pt\lvert{#1}\rvert\kern-1pt\rVert}
\def\<{{\langle}}
\def\>{{\rangle}}
\def\Forall{\quad \hbox{ for all }}
\def\div{\operatorname{div}}
\def\div{\operatorname{div}}
\def\div{\operatorname{div}}
\def\d{\Omega}
\def\Forall{\quad \hbox{ for all }}
\def\d{\Omega}
\def\Forall{\quad \hbox{ for all }}
\def\tb#1{{\|\kern-1pt| #1 \|\kern-1pt|}}
\def\nm2#1#2{\|#1\|_{2,\d_{#2}}}
\def\R{\mathbb{R}}
 \theoremstyle{plain}
 \newtheorem{thm}{Theorem}[section]
 \numberwithin{equation}{section} 
 \numberwithin{figure}{section} 
 \theoremstyle{plain}
 \newtheorem{prop}[thm]{Proposition}
 \theoremstyle{plain}
 \theoremstyle{plain}
 \newtheorem{theorem}[thm]{Theorem}
 \theoremstyle{plain}
 \newtheorem{corollary}[thm]{Corollary}
\theoremstyle{plain}
 \theoremstyle{plain}
\def\M{{\mathcal{M}}}
\def\d{{\Omega}}
\def\Forall{\quad \hbox{ for all }}
\def\div{\operatorname{div}}
\def\<{{\langle}}
\def\>{{\rangle}}
\def\R{\mathbb{R}}
\def\du#1#2#3{\overset{#3}{\underset{#2}{#1}}}
\def\div{\operatorname{div}}
\def\div{\operatorname{div}}
\begin{document}

\title[Notes on FE discretization for Convectio-Diffusion]
{Notes on Finite Element Discretization  for a Model Convection-Diffusion Problem}

\author{Constantin Bacuta}
\address{University of Delaware,
Mathematical Sciences,
501 Ewing Hall, Newark, DE 19716}
\email{bacuta@udel.edu}

\author{Daniel Hayes}
\address{University of Delaware,
Department of Mathematics,
501 Ewing Hall 19716}
\email{dphayes@udel.edu}

\author{Tyler O'Grady}
\address{University of Delaware,
Department of Mathematics,
501 Ewing Hall 19716}
\email{togrady@@udel.edu}

\keywords{least squares, saddle point systems, mixed methods,  optimal test norm, Uzawa conjugate gradient, preconditioning}

\subjclass[2000]{74S05, 74B05, 65N22, 65N55}
\thanks{The work was supported  by NSF-DMS 2011615}%

\begin{abstract}

We present recent finite element  numerical  results on  a model convection-diffusion problem in the singular perturbed case when the convection term dominates the problem.   We compare  the standard Galerkin discretization using the linear element with a saddle point least square discretization that uses quadratic test functions, trying to control and explain the non-physical oscillations of the discrete solutions. We also relate  the  up-winding Petrov-Galerkin method and  the stream-line diffusion discretization method, by emphasizing the resulting  linear  systems and by comparing appropriate error norms.  Some  results can  be extended to the multidimensional case in order to come up with efficient approximations for more general singular perturbed problems, including convection dominated models. 

\end{abstract}
\maketitle

\section{Introduction}


We consider   the model  singularly perturbed  convection-reaction-diffusion problem: Find $u$ defined on $\Omega$ such that 
\begin{equation}\label{PDE_RD}
   \left\{
\begin{array}{rcl}
     -\varepsilon\, \Delta u + b\cdot \nabla u +  cu=\ f & \mbox{in} \ \ \ \Omega,\\
      u =\ 0 & \mbox{on} \ \partial\Omega,\\ 
\end{array} 
\right. 
\end{equation}
for $\varepsilon>0$, $\div b =0$,  and $c(x) \geq c_0 >0$ on $\Omega$, a bounded domain in $\Omega \subset\R^d$. 

A variational formulation of \eqref{PDE_RD} is: Find $u\in H_0^1(\Omega)$ such that 
\begin{equation}\label{eq:var1}
{\varepsilon\, (\nabla u, \nabla v)  + (b \cdot \nabla u, v) +(cu,v)}  =  {(f, v)}  \Forall v  \in  H^1_0(\Omega).
\end{equation}

The simplified one dimensional version of \eqref{PDE_RD} with $b=1$ and $c=0$ is: 
Find $u=u(x)$  on $[0, 1]$ such that
\begin{equation}\label{eq:1d-model}
\begin{cases}-\varepsilon\,  u''(x)+u'(x)=f(x),& 0<x<1\\ u(0)=0, \ u(1)=0. \end{cases}
\end{equation} 
We will assume that  the problem reaction is dominated, i.e.,  $\varepsilon \ll 1$ and  $f$ is square integrable on $[0, 1]$. 

In what follows, we  will use the following  notation:
\[ 
\begin{aligned}
a_0(u, v) &= \int_0^1 u'(x) v'(x) \, dx, \ \text{and} \ (f, v) = \int_0^1  f(x) v(x) \, dx,\ \text{and}\\
b(v, u) &=\varepsilon\, a_0(u, v)+(u',v)  \ \text{for all} \ u,v \in V:=H^{1}_0(0,1).
\end{aligned}
 \]
The variational formulation of \eqref{eq:1d-model} is: Find $u \in V:= H_0^1(0,1)$ such that
 \begin{equation}\label{VF1d}
b(v,u) = (f, v), \ \text{for all} \ v \in V.
\end{equation}
The PDE model \eqref{eq:1d-model}, and specially its multi-dimensional  extension  \eqref{PDE_RD}, arise in solving  practical problems such as  heat transfer problems in thin domains, as well as when using small step sizes in implicit time discretizations of parabolic reaction diffusion type problems, see e.g., \cite{Lin-Stynes12} and the references in  \cite{roos-stynes-tobiska-96}. The solutions to these problems are characterized by  boundary layers 
\cite{Roos-Schopf15}, 
which pose  numerical challenges due to the $\varepsilon$-dependence of the  error estimates  and of the  stability constants. 

The goal of this work is to illustrate  some  challenges  of the finite element discretization of the one dimensional model reaction diffusion problem and to emphasize on the mixed formulation and discretization advantages. We hope that  ideas, concepts, or methods we present, can be extended  to the the multidimensional case of convection dominated problems of type  \eqref{PDE_RD}. 

  Saddle Point Least Squares (SPLS)  discretizatrion as presented \cite{BJ-nc, BJprec, BJ-AA,  BQ15, Dahmen-Welper-Cohen12, BVZ11}  were used before for singularly perturbed problems in order to  improve the stability and the  rate of convergence of the discrete solutions in  special norms. 
The   SPLS approach  uses an auxiliary variable that represents the residual of the original variational formulation on the test space and another simple equation involving the residual variable  that leads to a (square) symmetric saddle point system that is more suitable for analysis and discretization. The idea is similar to the Lagrange multiplier approach, with the exception that the Lagrange multiplier here is the variable of interest. 
The SPLS method  or its variants, such as the Discontinuous Petrov–Galerkin (DPG) method,  was used efficiently for  other mixed variational  problems, see e.g., \cite{BM12,Dahmen-Welper-Cohen12,demkowicz, Barrett-Morton81}.  Many of the aspects  regarding  SPLS formulation  are common to both the DPG approach \cite{ bouma-gopalakrisnan-harb2014,car-dem-gop16,demkowicz-gopalakrishnanDPG10, demkowicz2011class,J5onDPG, dem-fuh-heu-tia19}  and the SPLS approach developed in  \cite{BJ-nc, BJprec, BJ-AA, BQ15}.

The paper is organized as follows.  We review the main ideas  of the SPLS approach in an abstract general setting in Section \ref{sec:ReviewLSPP}. In Section \ref{sec:abstract-discretization}, we present the SPLS discretization together with some general  error approximation results. We include here a new approximation result for the Petrov-Galerkin case when the norm on the continuous and discrete test spaces could be different. Section \ref{sec:1d-lin-discrete} deals with a review of four know discretization methods that have  $C^0-P^1$ as trial space  and can be viewed as mixed methods. We illustrate with plots of the discrete solutions the non-physical oscillation phenomena  for the standard and SPLS discretization and  emphasize  the strong connection between a Petrov-Galerkin (PG) and  the stream-line diffusion (SD) methods. Numerical results are  presented in Section \ref{sec:NR}.

\section{The notation and the general SPLS approach}\label{sec:ReviewLSPP}
We now review  the main ideas and concepts for the  SPLS method for a  general  mixed variational formulation.
We follow the  Saddle Point Least Squares (SPLS)  terminology that was introduced in in  \cite{BJprec, BJ-AA,BJ-nc, BQ15}. 

\subsection{The abstract variational formulation at the continuous level}  \label{subsec:mh}
We consider the (mixed) Petrov-Galerkin formulation of the more general abstract formulation of \eqref{eq:1d-model}:
Find $u \in Q$ such that
 \begin{equation}\label{VFabstract}
b(v,u) = \<F, v\>, \ \text{for all} \ v \in V.
\end{equation}
where $Q$ and $V$ are separable Hilbert spaces and $F$ is a continuous linear functional on $V$.
We assume that the inner products $a_0(\cdot, \cdot)$ and $(\cdot, \cdot)_{{Q}}$ induce the  norms $|\cdot|_V =|\cdot| =a_0(\cdot, \cdot)^{1/2}$ and $\|\cdot\|_{Q}=\|\cdot\|=(\cdot, \cdot)_{Q}^{1/2}$. We denote the dual of $V$ by $V^*$ and the dual pairing on $V^* \times V$ by $\langle \cdot, \cdot \rangle$.  
We assume that $b(\cdot, \cdot)$ is a continuous bilinear form on $V\times Q$ satisfying
 the $\sup-\sup$ condition
 \begin{equation} \label{sup-sup_a}
\du{\sup}{u \in Q}{} \ \du {\sup} {v \in V}{} \ \frac {b(v, u)}{|v|\,\|u\|} =M <\infty, 
\end{equation} 
and the $\inf-\sup$ condition
 \begin{equation} \label{inf-sup_a}
 \du{\inf}{u \in Q}{} \ \du {\sup} {v \in V}{} \ \frac {b(v, u)}{|v|\,\|u\|} =m>0.
\end{equation}

With the form $b$, we associate the operators $\B:V\to {Q}$    defined by  
\[
(\B v,q)_{{Q}}=b(v, q) \,  \Forall  v \in V, q \in Q.
\]
We define $V_0$ to be the kernel of $\B$, i.e.,
\[ 
V_0 :=Ker(\B)= \{v \in V |\  \B v=0\}.
\]
Under assumptions \eqref{sup-sup_a}  and \eqref{inf-sup_a}, the operator $\B$ is a bounded surjective operator from $V$ to $Q$, and $V_0$ is a closed subspace of $V$. We will also assume that the  data $F \in V^*$ satisfies the {\it compatibility condition} 
\begin{equation}\label{eq:BBsuf}
\<F,v\> =0 \Forall v \in V_0=Ker(\B). 
\end{equation}
  
The following result describes the well posedness  of  \eqref{VFabstract} and can be used at the continuous  and discrete levels, see e.g.  \cite{A-B, B09, boffi-brezzi-demkowicz-duran-falk-fortin2006, boffi-brezzi-fortin}. 
\begin{prop} \label{prop:well4mixed} If the form $b(\cdot,\cdot)$ satisfies \eqref{sup-sup_a} and \eqref{inf-sup_a}, and the  data $F \in V^*$ satisfies the {\it compatibility condition}  \eqref{eq:BBsuf}, then  the problem \eqref{VFabstract} has  unique solution that depends continuously on the data $F$. 
\end{prop}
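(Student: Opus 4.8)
The plan is to recast \eqref{VFabstract} as a single operator equation and settle existence, uniqueness, and stability via the closed range theorem. First I would introduce the Hilbert adjoint $\B^{*}\colon Q\to V$ of the operator $\B$, characterized by $a_0(v,\B^{*}q)=(\B v,q)_{Q}=b(v,q)$ for all $v\in V$, $q\in Q$. The two standing hypotheses then translate into clean statements about $\B^{*}$. Taking the supremum over $v$ in the quotient of \eqref{inf-sup_a} and using Cauchy--Schwarz (the value is attained at $v=\B^{*}u$) gives $\sup_{v\in V} b(v,u)/|v| = |\B^{*}u|$, so \eqref{inf-sup_a} is equivalent to the lower bound $|\B^{*}u|\ge m\,\|u\|$ for all $u\in Q$, while \eqref{sup-sup_a} says precisely that $\B^{*}$ is bounded with $\|\B^{*}\|=M$.

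Second, from the lower bound I would conclude that $\B^{*}$ is injective with closed range, and then identify that range. By the closed range theorem, $\overline{\mathrm{Range}(\B^{*})}=(\ker\B)^{\perp}=V_0^{\perp}$; since the lower bound forces the range to be closed, in fact $\mathrm{Range}(\B^{*})=V_0^{\perp}$ exactly. This is the structural fact on which everything rests.

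Third, I would reduce \eqref{VFabstract} to the equation $\B^{*}u=\tau_F$, where $\tau_F\in V$ is the Riesz representative of $F$ with respect to $a_0$, so that $\langle F,v\rangle=a_0(v,\tau_F)$. Because $b(v,u)=a_0(v,\B^{*}u)$, the identity \eqref{VFabstract} holds for every $v\in V$ if and only if $\B^{*}u=\tau_F$. The compatibility condition \eqref{eq:BBsuf} reads $a_0(v,\tau_F)=0$ for all $v\in V_0$, that is $\tau_F\in V_0^{\perp}=\mathrm{Range}(\B^{*})$, which is exactly what solvability requires. Injectivity of $\B^{*}$ then yields uniqueness, and the lower bound gives $\|u\|\le \tfrac1m\,|\B^{*}u|=\tfrac1m\,|\tau_F|=\tfrac1m\,\|F\|_{V^{*}}$, the continuous dependence on the data.

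The one genuinely delicate point, and the \emph{main obstacle}, is the identification $\mathrm{Range}(\B^{*})=V_0^{\perp}$ together with the realization that the compatibility condition \eqref{eq:BBsuf} is not an auxiliary technicality but precisely the statement $\tau_F\in\mathrm{Range}(\B^{*})$. This is what allows existence to go through even when $V_0=\ker\B$ is nontrivial, i.e. when the form $b(\cdot,u)$ cannot represent arbitrary data in those test directions. The remaining ingredients---the Riesz identification of $F$, the Cauchy--Schwarz evaluation of the inner supremum, and the stability estimate---are routine. Alternatively one could simply invoke the Banach--Ne\v{c}as--Babu\v{s}ka or Brezzi framework cited among the references, but the operator argument above makes the exact role of \eqref{eq:BBsuf} transparent.
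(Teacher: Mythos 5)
Your argument is correct and complete: the reduction of \eqref{VFabstract} to $\B^{*}u=\tau_F$, the equivalence of \eqref{inf-sup_a} with the lower bound $|\B^{*}u|\ge m\|u\|$, the identification $\mathrm{Range}(\B^{*})=V_0^{\perp}$ via the closed range theorem, and the role of \eqref{eq:BBsuf} as exactly the condition $\tau_F\in\mathrm{Range}(\B^{*})$ are all sound, and they yield the stability bound $\|u\|\le\|F\|_{V^*}/m$. The paper itself gives no proof of this proposition, deferring to the cited references (Aziz--Babu\v{s}ka, Boffi--Brezzi--Fortin, etc.), and your closed-range argument is essentially the standard proof found there, so there is nothing to contrast.
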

It is also known, see e.g., \cite{BM12, BQ15,BQ17, Dahmen-Welper-Cohen12} that, under the  {\it compatibility condition} \eqref{eq:BBsuf}, solving the mixed problem  \eqref{VFabstract} reduces to  solving a standard saddle point reformulation: Find $(w, u) \in V \times Q$ such that  
\begin{equation}\label{abstract:variational2}
\begin{array}{lclll}
a_0(w,v) & + & b( v, u) &= \langle F,v \rangle &\ \Forall  v \in V,\\
b(w,q) & & & =0   &\  \Forall  q \in Q.  
\end{array}
\end{equation}
In fact,  we have that $p$ is the unique solution of \eqref{VFabstract}  {\it if and only if} $(w=0 , p) $ solves  \eqref{abstract:variational2}, and the result remains valid if the form $a_0(\cdot, \cdot)$ in \eqref{abstract:variational2} is replaced by any other symmetric bilinear form  $a(\cdot, \cdot)$  on $V$ that leads to an equivalent norm on $V$. 




\section{Saddle point least squares discretization}\label{sec:abstract-discretization}

We will assume next that  $V$ and $Q$ are Hilbert spaces with norms and inner products as defined in Section  \ref{sec:ReviewLSPP}. 
Let $V_h\subset V$ and  $\M_h\subset Q$ be finite dimensional approximation spaces.
 We assume the following discrete $\inf-\sup$ condition holds for the pair of spaces $(V_h,\M_h)$:
 \begin{equation} \label{inf-sup_h}
\du{\inf}{p_h \in \M_h}{} \ \du {\sup} {v_h \in V_h}{} \ \frac {b(v_h, p_h)}{|v_h|\,\|p_h\|} =m_h>0.
\end{equation} 
As in the continuous case we  define 
\[
V_{h,0}:=\{v_h\in V_h\,|\, b(v_h,q_h)=0,\Forall q_h\in \M_h\},
\] 
 and $F_h \in V_h^*$ to be the restriction of $F$ to $V_h$, i.e.,   $\langle F_h, v_h \rangle:=\langle F, v_h \rangle$ for all $v_h \in V_h$. 
In the case $V_{h,0} \subset V_0$, the compatibility condition \eqref{eq:BBsuf} implies the discrete compatibility condition
\[
\langle F,v_h\rangle =0 \Forall v_h\in V_{h,0}.
\]
Hence, under assumption \eqref{inf-sup_h}, the PG  problem of finding $u_h\in \M_h$ such that 
\begin{equation}\label{discrete_var_form}
b(v_h, u_h)=\langle F,v_h\rangle,  \ v_h\in V_h
\end{equation}
has a unique solution. In general, we might not have  $V_{h,0}\subset V_0$. Consequently,  even though the continuous problem \eqref{VFabstract} is well posed, the discrete problem  \eqref{discrete_var_form} might not be  well-posed. However, if the form $b(\cdot, \cdot)$ satisfies \eqref{inf-sup_h}, then the problem of finding $(w_h,p_h) \in V_h\times \M_h$ satisfying  
\begin{equation}\label{discrete:variationalSPP}
\begin{array}{lclll}
a_0(w_h,v_h) & + & b( v_h, p_h) &= \langle f,v_h \rangle &\ \Forall  v_h \in V_h,\\
b(w_h,q_h) & & & =0   &\  \Forall  q_h \in \M_h, 
\end{array}
\end{equation} 
 does have a unique solution. 
We call  the component $u_h$  of the solution $(w_h,u_h)$ 
of \eqref{discrete:variationalSPP} the  {\it saddle point least squares} approximation of the solution $u$ of the original mixed problem \eqref{VFabstract}.

The following error estimate for $\|u-u_h\|$ was proved in \cite{BQ15}. 

 
\begin{theorem}\label{th:sharpEE} 
Let $b:V \times Q \to \R$  satisfy \eqref{sup-sup_a} and \eqref{inf-sup_a} and assume that   ${F}  \in V^*$  is given and satisfies \eqref{eq:BBsuf}. Assume that  $u$  is the  solution  of \eqref{VFabstract} and  $V_h \subset V$,  $ {\M}_h \subset  Q$ are  chosen such that the discrete $\inf-\sup$ condition   \eqref{inf-sup_h} holds. If  $\left (w_h, u_h \right )$ is the  solution  of \eqref{discrete:variationalSPP}, then the following error estimate holds:
\begin{equation}\label{eq:er4LS} 
 \frac 1 M |w_h| \leq \|u-u_h\| \leq  \frac{M}{m_h} \  \du{\inf}{q_h \in\M_h
}{}  \|u-q_h\|.
\end{equation} 
\end{theorem}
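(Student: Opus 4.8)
The plan is to reduce \eqref{eq:er4LS} to a single \emph{error identity} and then treat the two inequalities by completely different mechanisms: the lower bound is an immediate energy test, while the upper bound is a sharp quasi-optimality statement obtained by viewing $u_h$ as the image of $u$ under a Hilbert-space projection. First I would extract the error identity. Because $u$ solves \eqref{VFabstract} and $V_h\subset V$, restricting the continuous equation to test functions in $V_h$ gives $b(v_h,u)=\<F,v_h\>$ for all $v_h\in V_h$. Subtracting the first line of \eqref{discrete:variationalSPP} then yields
\[
a_0(w_h,v_h)=b(v_h,u-u_h)\qquad\text{for all }v_h\in V_h,
\]
while the second line of \eqref{discrete:variationalSPP} says exactly that $w_h\in V_{h,0}$. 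Testing the identity with $v_h=w_h$ and using the continuity bound \eqref{sup-sup_a} gives $|w_h|^2=b(w_h,u-u_h)\le M\,|w_h|\,\|u-u_h\|$, which is the lower bound $\tfrac1M|w_h|\le\|u-u_h\|$ (the case $w_h=0$ being trivial).

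For the upper bound the key observation is a Galerkin orthogonality on the $a_0$-orthogonal complement $V_{h,0}^{\perp}$ of $V_{h,0}$ in $V_h$: for $v_h\in V_{h,0}^{\perp}$ the left-hand side of the error identity vanishes because $w_h\in V_{h,0}$, so $b(v_h,u-u_h)=0$. I would then introduce the Petrov--Galerkin projection $\Pi_h\colon Q\to\M_h$ defined by requiring $b(v_h,\Pi_h z)=b(v_h,z)$ for all $v_h\in V_{h,0}^{\perp}$; the discrete inf--sup condition \eqref{inf-sup_h} forces $\dim V_{h,0}^{\perp}=\dim\M_h$ and makes this defining linear system uniquely solvable, so $\Pi_h$ is a well-defined linear projection onto $\M_h$. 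The orthogonality just derived identifies $u_h=\Pi_h u$, and testing the definition against \eqref{inf-sup_h} and \eqref{sup-sup_a} gives, for every $z\in Q$,
\[
m_h\,\|\Pi_h z\|\le \du{\sup}{v_h\in V_{h,0}^{\perp}}{}\frac{b(v_h,\Pi_h z)}{|v_h|}=\du{\sup}{v_h\in V_{h,0}^{\perp}}{}\frac{b(v_h,z)}{|v_h|}\le M\,\|z\|,
\]
that is, $\|\Pi_h\|\le M/m_h$.

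The final, and I expect hardest, step is to obtain the sharp constant $M/m_h$ rather than the $1+M/m_h$ produced by a naive triangle inequality. Here I would invoke the Kato/Xu--Zikatanov identity $\|I-\Pi_h\|=\|\Pi_h\|$, valid for any bounded projection that is neither $0$ nor the identity (both excluded here since $\{0\}\neq\M_h\neq Q$). Since $\Pi_h q_h=q_h$ for every $q_h\in\M_h$, we have $u-u_h=(I-\Pi_h)u=(I-\Pi_h)(u-q_h)$, whence $\|u-u_h\|\le\|I-\Pi_h\|\,\|u-q_h\|=\|\Pi_h\|\,\|u-q_h\|\le\frac{M}{m_h}\|u-q_h\|$; taking the infimum over $q_h\in\M_h$ completes \eqref{eq:er4LS}. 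The main obstacle is thus conceptual rather than computational: recognizing that the saddle-point component $u_h$ is exactly a projection of $u$, so that the projection-norm identity can replace the lossy triangle-inequality argument.
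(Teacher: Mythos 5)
The paper does not give its own proof of Theorem \ref{th:sharpEE}; it only cites \cite{BQ15}, and your argument is correct and essentially the one used there: the error identity $a_0(w_h,v_h)=b(v_h,u-u_h)$ tested with $v_h=w_h$ gives the lower bound, while the sharp constant $M/m_h$ in the upper bound comes from identifying $u_h=\Pi_h u$ for a bounded oblique projection $\Pi_h$ onto $\M_h$ and invoking the Kato/Xu--Zikatanov identity $\|I-\Pi_h\|=\|\Pi_h\|$ in place of the lossy triangle inequality. The only step you leave implicit is that $\sup_{v_h\in V_{h,0}^{\perp}}\,b(v_h,p_h)/|v_h|=\sup_{v_h\in V_h}\,b(v_h,p_h)/|v_h|\ge m_h\|p_h\|$, which holds because $b(\cdot,p_h)$ vanishes on $V_{h,0}$ and $V_h$ decomposes $a_0$-orthogonally, so restricting the supremum to $V_{h,0}^{\perp}$ loses nothing.
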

 
The considerations made so far in this section remain valid  if the form $a_0(\cdot, \cdot)$, as an inner product on $V_h$, is replaced by another inner product $a(\cdot, \cdot)$ which gives rise to an equivalent  norm on $V_h$. 


\section{Discretization with $C^0-P^1$ trial space for the 1D Convection reaction problem}\label{sec:1d-lin-discrete}
In this section we   review  standard finite element discretizations of \eqref{eq:1d-model} and emphasize the ways the corresponding linear system relate. The concepts presented in this section are focused on uniform mesh discretization, but most of the results can be easily  extended to non-uniform meshes. 

We  divide the interval $[0,1]$ into $n$  equal length subintervals,  using the nodes $0=x_0<x_1<\cdots < x_n=1$ and denote  $h:=x_j - x_{j-1}, j=1, 2, \cdots, n$. For the above uniform distributed notes on $[0, 1]$, we define  the corresponding   discrete space  $\M_h$  as  the subspace of $Q = H^1_0(0,1)$, given by
 \[ 
 \M_h = \{ v_h \in V \mid v_h \text{ is linear on each } [x_j, x_{j + 1}]\},
 \]
  i.e., $\M_h$ is the space of all {\it piecewise linear continuous functions} with respect to the given nodes, that are zero at $x=0$ and $x=1$.  We consider the nodal basis $\{ \varphi_j\}_{j = 1}^{n-1} \subset V_h$ with the standard defining property $\varphi_i(x_j ) = \delta_{ij}$. 

\subsection{Standard Linear discretization} \label{sec:LinP1}
We couple the above discrete trial space with a discrete  test space $V_h:=\M_h$.  
 Thus, the standard (linear) discrete variational formulation of \eqref{VF1d} is: Find $u_h \in \M_h$ such that
 \begin{equation}\label{dVF}
b(v_h, u_h) = (f, v_h), \ \text{for all} \ v_h \in V_h.
\end{equation}
We look for  $u_h \in V_h$ with  the nodal basis expansion
 \[
 u_h := \sum_{i=1}^{n-1} u_i \varphi_i, \ \text{where} \ \ u_i=u_h(x_i).
 \]
If we consider the test functions $v_h=\varphi_j, j=1,2,\cdots,n-1$ in \eqref{dVF}, we  obtain  the following linear system 
 \begin{equation}\label{1d-LS}
 \left (\frac{\varepsilon}{h}  S+ C \right )\, U = F, 
\end{equation}

where \(U,F\in\R^{n-1}\) and \(S, C \in\R^{(n-1)\times(n-1)}\) with:
 \[
 U:=\begin{bmatrix}u_1\\u_2\\\vdots\\u_{n-1}\end{bmatrix},\quad F:=\begin{bmatrix}(f,\varphi_1)\\ (f,\varphi_2)\\\vdots \\ (f,\varphi_{n-1})\end{bmatrix}, \text{and} 
\]
 \[
 S:=\begin{bmatrix}2&-1\\-1&2&-1\\&\ddots&\ddots&\ddots\\&&-1&2&-1\\&&&-1&2\end{bmatrix},\quad 
 C:=\frac{1}{2}\begin{bmatrix}0&1\\-1&0&1\\&\ddots&\ddots&\ddots\\&&-1&0&1\\&&&-1&0\end{bmatrix}.\] 
Note that, letting   $\varepsilon \to 0$  in \eqref{VF1d} we obtain the {\it simplified problem}: \\
Find $w \in H_0^1(0,1)$ such that
\begin{equation} \label{VF1d-simplified-w}
(w',v) = (f, v), \ \text{for all} \ v \in V. 
\end{equation}

The problem \eqref{VF1d-simplified-w} has unique solution,  if and only if $\int_0^1  f(x)  \, dx=0$.    For the case $\int_0^1  f(x)  \, dx \neq 0$ we can consider  the  {\it reduced problem}: \\ Find $w \in H^1(0,1)$ such that
 \begin{equation}\label{VF1d-reduced}
w'(x) = f(x)  \ \text{for all} \ x \in (0, 1), \text{and} \ w(0)=0. 
\end{equation}
with the unique solution $w(x) = \int_0^x  f(x)  \, dx$. 

The corresponding finite element discretization of the {\it simplified problem} \eqref{VF1d-simplified-w} leads  to find  $w_h := \sum_{i=1}^{n-1} u_i \varphi_i$, where
\begin{equation}\label{lin-reduced}
C\, U = F.
\end{equation}
It is interesting to note that, even though \eqref{VF1d-simplified-w} is not well posed in general, the  system \eqref{lin-reduced} decouples into two independent systems, and at least for  $n=2m+1$,  it has unique solution. Indeed, by defining  $u_0=u_n=0$, then for the case $n=2m+1$ we get 
\begin{equation}\label{sys-even}
\begin{cases} u_2 -u_0 &=2(f, \varphi_1) \\  u_4 -u_2&=2(f, \varphi_3)\\ \vdots \\ 
u_{2m} -u_{2m-2}&=2(f, \varphi_{2m-1}), 
\end{cases}
\end{equation}
and
 \begin{equation}\label{sys-odd}
\begin{cases} u_3 -u_1 &=2(f, \varphi_2) \\  u_5 -u_3&=2(f, \varphi_4)\\ \vdots \\ 
u_{2m+1} -u_{2m-1}&=2(f, \varphi_{2m}).
\end{cases}
\end{equation}
In this case the  systems \eqref{sys-even} and \eqref{sys-odd} have  unique solutions, and can be solved forward and backward  respectively, to get 

 \begin{equation}\label{sys-sol-odd}
\begin{cases} u_{2k} &=2 \sum_{j=1}^k (f, \varphi_{2j-1}), \ k=1,2,\cdots,m \\ 
 u_{2m-2k+1} &=-2 \sum_{j=1}^{k} (f, \varphi_{2m-2j+2}), \ k=1,2, \cdots,m
\end{cases}
\end{equation}
For $f=1$ on $[0, 1]$, we have $(f,\varphi_i) =h$ for all $i=1,2, \cdots, 2m$, and 
 \begin{equation}\label{sys-sol-odd-f1}
\begin{cases} u_{2k} &=2kh=x_{2k}, \ k=1,2,\cdots,m \\ 
 u_{2m-2k+1} &=- 2kh =x_{2m-2k+1}-1, \ k=1,2, \cdots,m.
\end{cases}
\end{equation}
Thus, the even components interpolate the  solution of the function $x$  and the odd components interpolate the function $x-1$.
The combined solution leads to a very oscillatory behavior when $n\to \infty$.  For $\varepsilon/h < < 1$ (a good threshold  is $\varepsilon/h \leq 10^{-4}$ ) the solution of \eqref{dVF} is very close to the solution of the simplified  system \eqref{lin-reduced}, and a similar oscillatory behavior  is observed for the linear  finite element solution of \eqref{dVF} when using an odd number of subintervals $n$, see Fig.1. We note  that, for an arbitrary (smooth) $f$,  the even components $\{u_{2k}\}$, approximate  $w(x)$ the solution of the initial value problem (IVP)  \eqref{VF1d-reduced}, and the  the odd components approximate  the function $\theta(x)=w(x)- \int_0^1 f(x)\, dx$, see Fig.1 and Fig.5. This can be justified by noticing that if we replace in \eqref{sys-even} the values $(f,\varphi_i)$  by   $h\, f(x_i)$ - the corresponding trapezoid rule approximation of the integral, the solution of the modified system coincides with the  mid-point method approximation (on the even nodes, $h\to 2h$) of the IVP \eqref{VF1d-reduced}. \\

 \parbox{2.3in}{
\begin{center}
\includegraphics[width=2.3in]{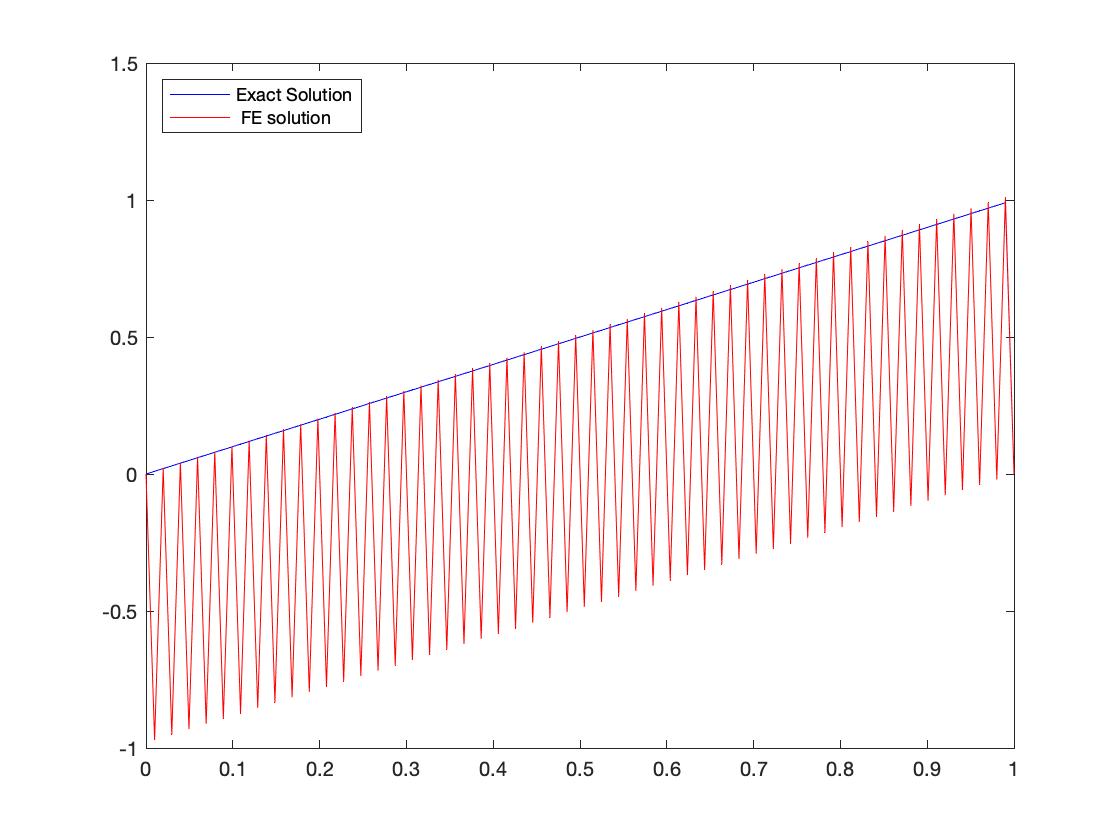}\\
{Fig.1: $f=1, n=101, \varepsilon=10^{-6}$} 
\end{center}
}
\parbox{2.3in}{
\begin{center}
\includegraphics[width=2.3in]{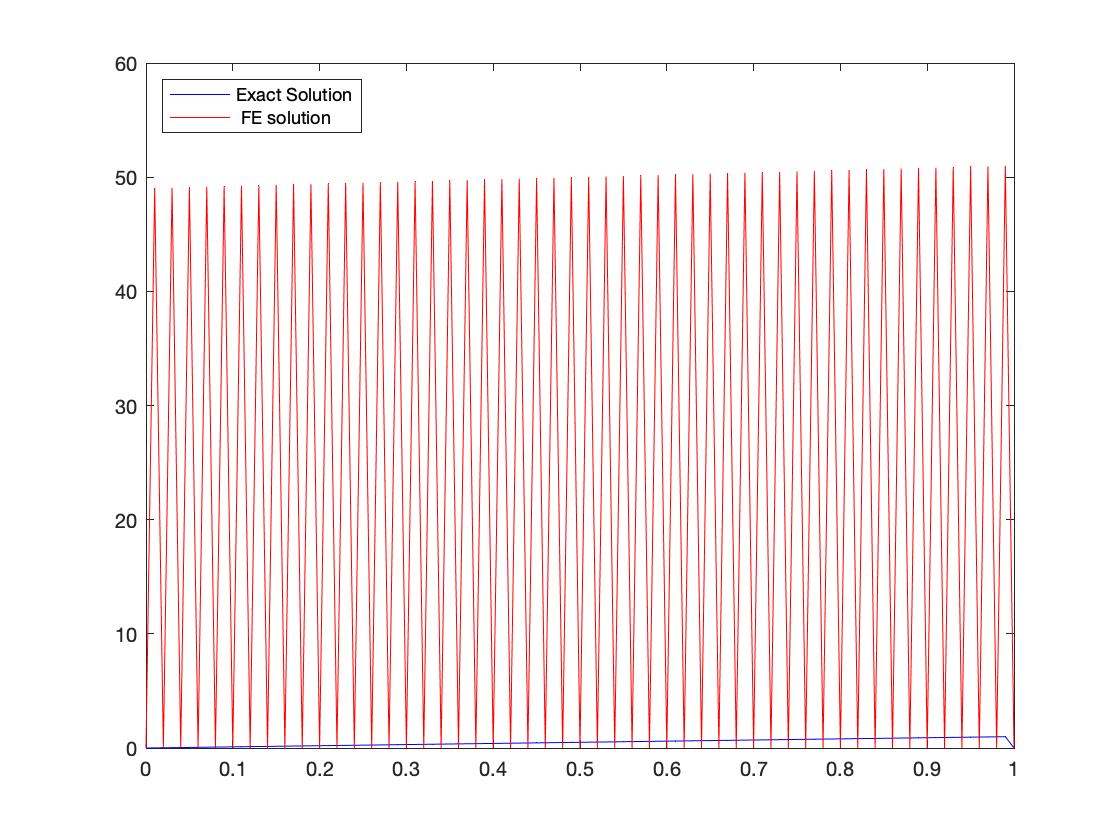}\\
{Fig.2: $f=1, n=102$,  $\varepsilon=10^{-6}$} 
\end{center}
}

 
 \parbox{2.3in}{
\begin{center}
\includegraphics[width=2.3in]{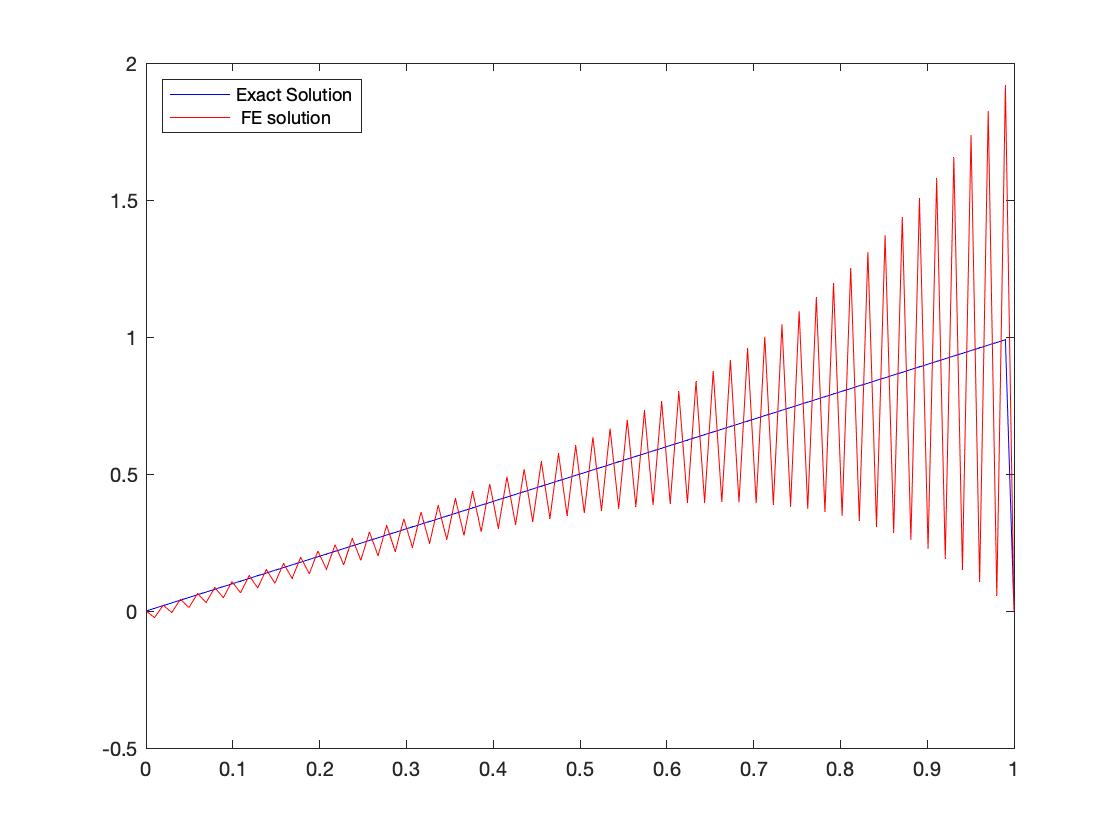}\\
{Fig.3:  $f=1, n=101, \varepsilon=10^{-4}$} 
\end{center}
}
\parbox{2.3in}{
\begin{center}
\includegraphics[width=2.3in]{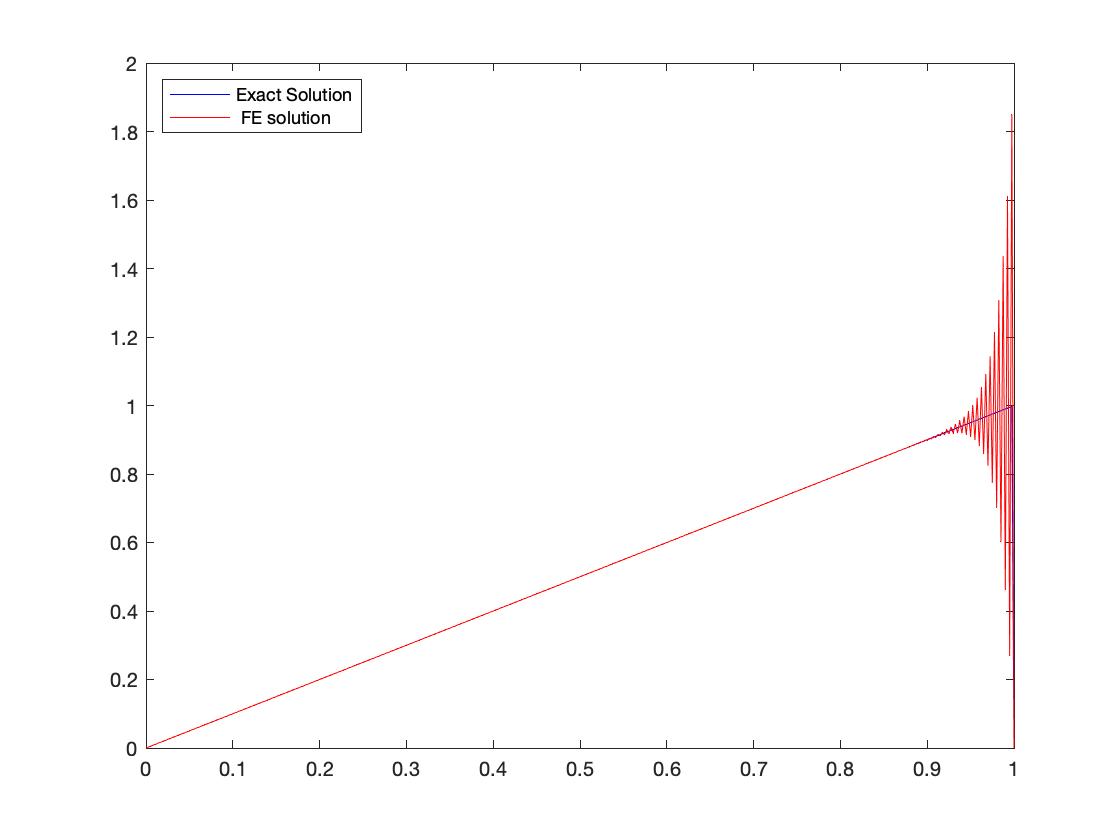}\\
{Fig.4: $f=1, n=400, \varepsilon=10^{-4}$} 
\end{center}
}
\vspace{0.1in}

 \parbox{2.3in}{
\begin{center}
\includegraphics[width=2.3in]{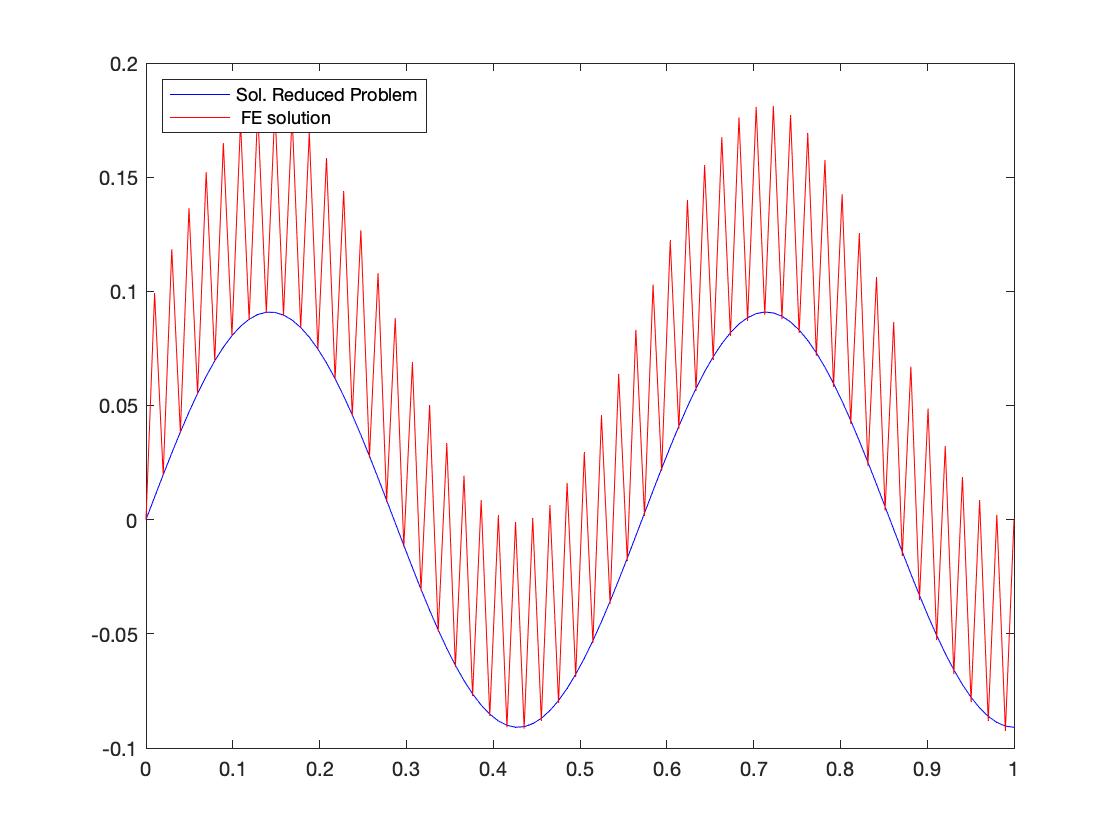}\\
{Fig.5:$f=\cos(\frac{7\pi}{2} x)$,  $n=101,  \ \ \ \ \varepsilon=10^{-6}$} 
\end{center}
}
\parbox{2.3in}{
\begin{center}
\includegraphics[width=2.3in]{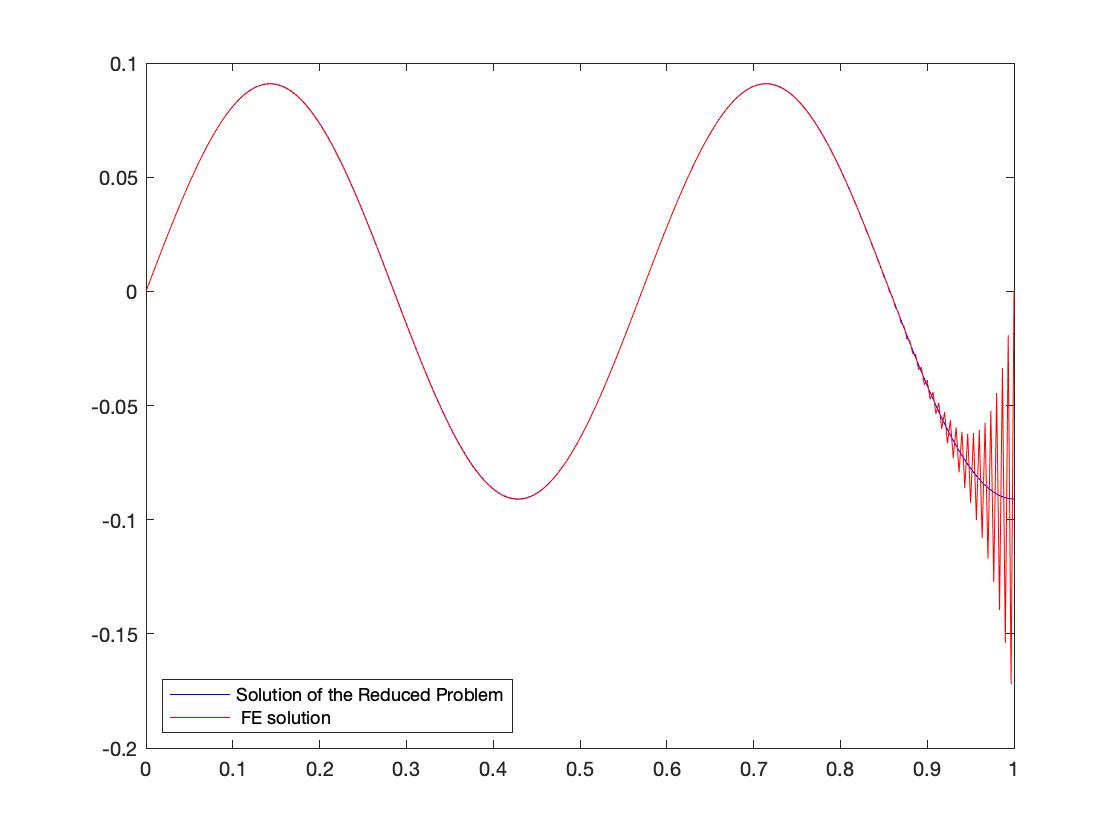}\\
{$f=\cos(\frac{7\pi}{2} x)$, $ n=300, \varepsilon=10^{-4}$} 
\end{center}
}
\vspace{0.1in}

Similarly,  the  solution of the modified  system \eqref{sys-odd} (obtained by replacing $(f,\varphi_i)$  with  $h\, f(x_i)$) coincides with the mid-point method approximation (on odd nodes)  of  the IVP \\
\begin{equation}\label{VF1d-reduced2}
\theta'(x) = f(x)  \ \text{for all} \ x \in (0, 1), \text{and} \ \theta(1)=0.
\end{equation}
The solution of \eqref{VF1d-reduced2} is  $\theta(x)= -\int_x^1 f(s)\, ds$. Thus,  $\theta(x)=w(x)- \int_0^1 f(x)\, dx$.
For the case $n=2m$, the system \eqref{sys-even} is the same, but since $u_0=u_{2m} =0$, the system might not have a solution.  In addition, the second system   \eqref{sys-odd} (with  the last equation  removed)  is undetermined and could have infinitely many solutions. The discretization of \eqref{dVF}  is still very oscillatory in this case, see Fig.2. As the ratio $\varepsilon/h \to 1$, from  numerical  tests, we  note  that the linear finite element solution of \eqref{dVF}  oscillates between two curves (that depend on $h$ and are independent of the parity of the number of nodes),  and approximate well the graph of $w$ on intervals $[0, \, \alpha(h)]$ with $\alpha(h)\to 1$  as $h$ gets closer and closer to $\epsilon$, see Fig.3, Fig.4, and Fig.6.

The behavior of the standard linear finite element approximation  motivates the need for other methods, including  {\it saddle point   least square} or   {\it Petrov-Galerkin} methods.

\subsection{$(P^1-P^2)$-SPLS discretization} \label{sec:MMD}
For improving the stability  and approximability of the finite element approximation a  {\it  saddle point least square}  (SPLS) method can be used, see e.g., \cite{Dahmen-Welper-Cohen12, dem-fuh-heu-tia19, BM12}.  The SPLS method for solving \eqref{VF1d} is: Find $(w, u) \in V \times Q$ such that 
\begin{equation}\label{SPLS4model2}
\begin{array}{lclll}
a_0(w,v) & + & b( v, u) &= (f,v ) &\ \Forall  v \in V,\\
b(w,q) & & & =0   &\  \Forall  q \in Q,   
\end{array}
\end{equation}
where $V=Q= H^1_0(0,1)$, with possible  different type of norms, and \\  $b(v, u) =\varepsilon\, a_0(u, v)+(u',v)= \varepsilon\, (u', v')+(u',v)$. 

For the discretization of \eqref{SPLS4model2}  we choose finite element space
$\M_h \subset Q$ and $V_h \subset V$ and solve the discrete problem: 
Find $(w_h, u_h) \in V_h \times \M_h$ such that 
\begin{equation}\label{SPLS4model-h}
\begin{array}{lclll}
a_0(w_h,v_h) & + & b( v_h, u_h) &= (f,v_h ) &\ \Forall  v_h \in V_h,\\
b(w_h,q_h) & & & =0   &\  \Forall  q_h \in \M_h.   
\end{array}
\end{equation}


Analysis and numerical results  for finite element  test and trail spaces of various degree polynomial were done in  \cite{dem-fuh-heu-tia19}. We present next some numerical observations for  $\M_h= C^0-P^1:= span\{ \varphi_j\}_{j = 1}^{n-1}$, with $\varphi_j$'s  the standard linear nodal functions  and $V_h=C^0-P^2$  on the given uniformly distributed nodes on $[0, 1]$, to show the  improvement from the standard linear discretization.  
The presence of non-phisical oscillation is diminished, and the  errors are better for the SPLS discretization, see Table 1 and Table 2.  

While for  $\int_0^1f(x)\, dx=0$ there is no much difference in the solution behaviour for the two methods, 
for  $\int_0^1  f(x) \, dx\neq 0$, numerical tests  showed  an essential improuvement for the SPLS solution. Inside the interval $[3h, 1-3h]$ the SPLS solution $u_h$, approximates the  shift by a constant  of  the solution $u$ of the original problem \eqref{VF1d},   see Fig.7-Fig.10. The oscillations appear only at the ends of the interval. 
The behavior can be explained by similar arguments presented in Section \ref{sec:LinP1} as follows: 
The {\it simplified} problem, obtained from \eqref{SPLS4model2} by letting $\varepsilon \to 0$, is not well posed when $\int_0^1  f(x)  \, dx\neq 0$. However, the {\it simplified} linear system obtained from \eqref{SPLS4model-h}  by letting $\varepsilon \to 0$, i.e. find $(w_h, u_h) \in V_h \times \M_h$ such that 
\begin{equation}\label{SPLS4model-h-R}
\begin{array}{lclll}
(w'_h,v'_h) & + & (u'_h, v_h) &= (f,v_h ) &\ \Forall  v_h \in V_h,\\
(w_h,q'_h) & & & =0   &\  \Forall  q_h \in \M_h,   
\end{array}
\end{equation}
 has unique solution, because a discrete $\inf-\sup$ condition can be demonstrated using a specific choice of norms.    Numerical tests  (for $\varepsilon \leq 10^{-3}$) show that the solution of the simplified system \eqref{SPLS4model-h-R} approximates  (when $h\to0$) the function  $\frac{1}{2} (w(x) +\theta(x))$ where $w, \theta$, are  the solution of the reduced problems \eqref{VF1d-reduced} and \eqref{VF1d-reduced2}. A similar type of  oscillations (depending only on $h$) towards the ends of $[0, 1]$ are still presented. For example, for $f=1$  the solution of \eqref{SPLS4model-h-R} with $n=101$, is  close to $x-1/2$, see  Fig.7. 
For  $\varepsilon/h \leq 10^{-4}$ the solution of \eqref{SPLS4model-h} is close to the solution of   \eqref{SPLS4model-h-R}.  However, as $10^{-4}<\varepsilon/h  \to 1$, the  solution of  \eqref{SPLS4model-h} is decreasing  the size of the shifting constant and approximates  $u$ (rather than ${1}/{2} (w(x) +\theta(x))$).  Similar oscillations are still present, but only outside of the interval $[3h, 1-3h]$. 


 \parbox{2.5in}{
\begin{center}
\includegraphics[width=2.5in]{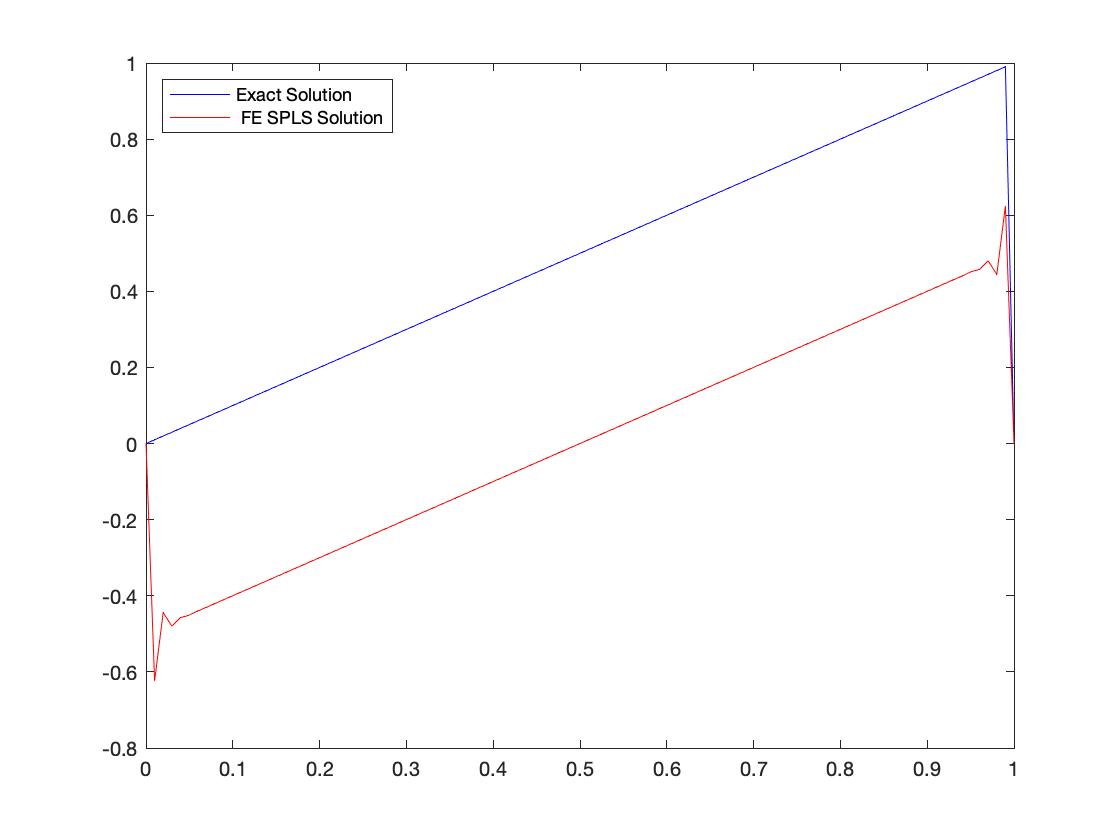}\\
{Fig.7:  $f=1, n=101, \varepsilon=10^{-6}$} 
\end{center}
}
\parbox{2.5in}{
\begin{center}
\includegraphics[width=2.5in]{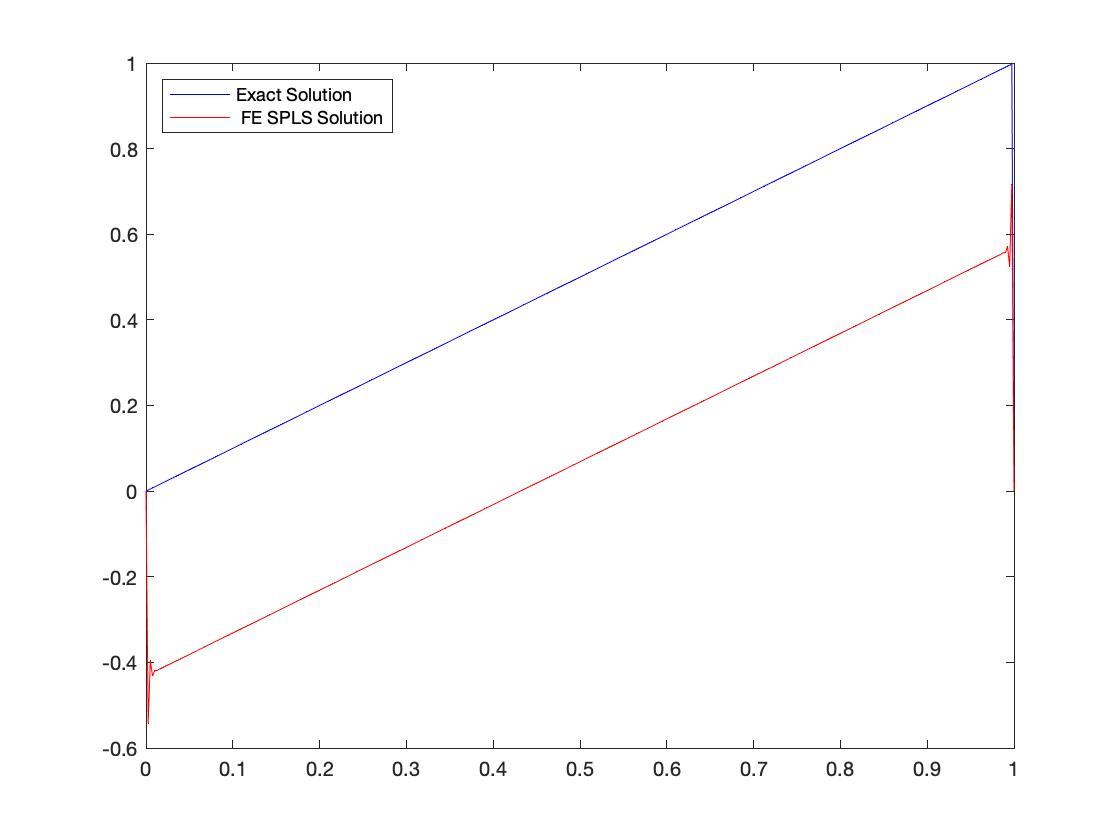}\\
{Fig.8: $f=1, n=400, \varepsilon=10^{-4}$} 
\end{center}
}
\vspace{0.1in}

 \parbox{2.5in}{
\begin{center}
\includegraphics[width=2.5in]{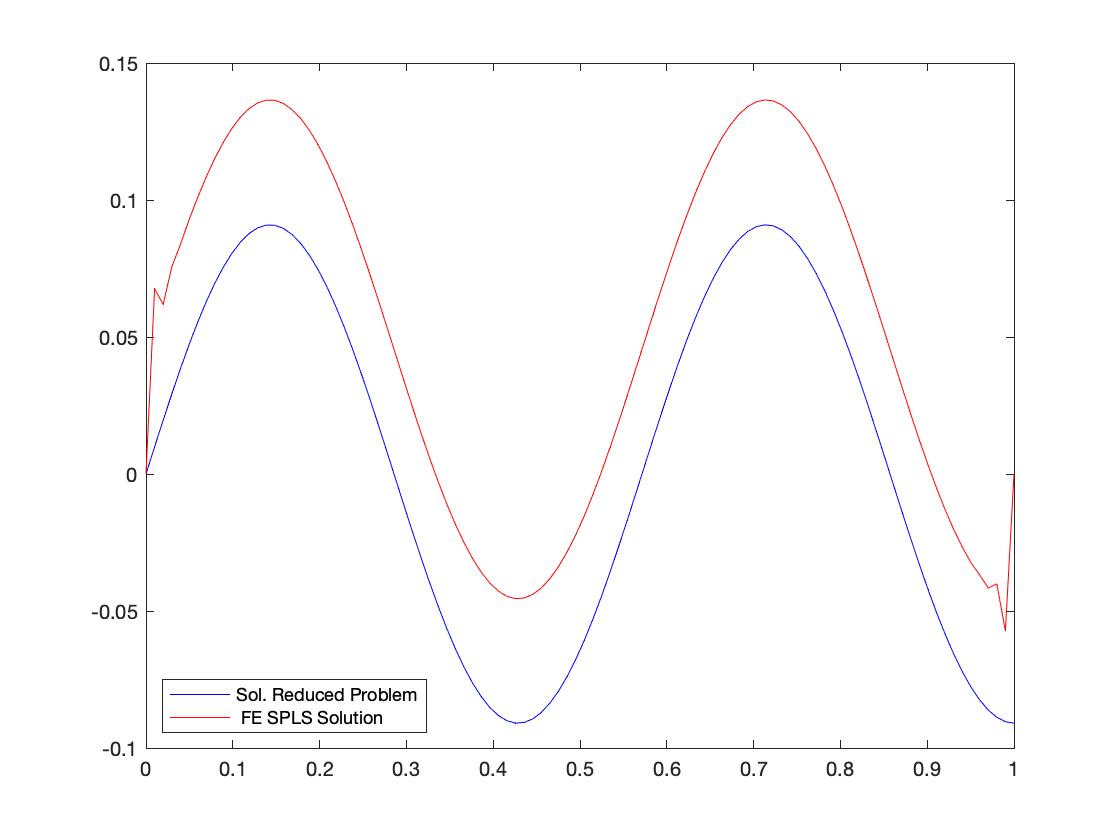}\\
{Fig.9 $f=\cos(\frac{\pi}{2} x), n=101, \varepsilon=10^{-6}$} 
\end{center}
}
\parbox{2.5in}{
\begin{center}
\includegraphics[width=2.5in]{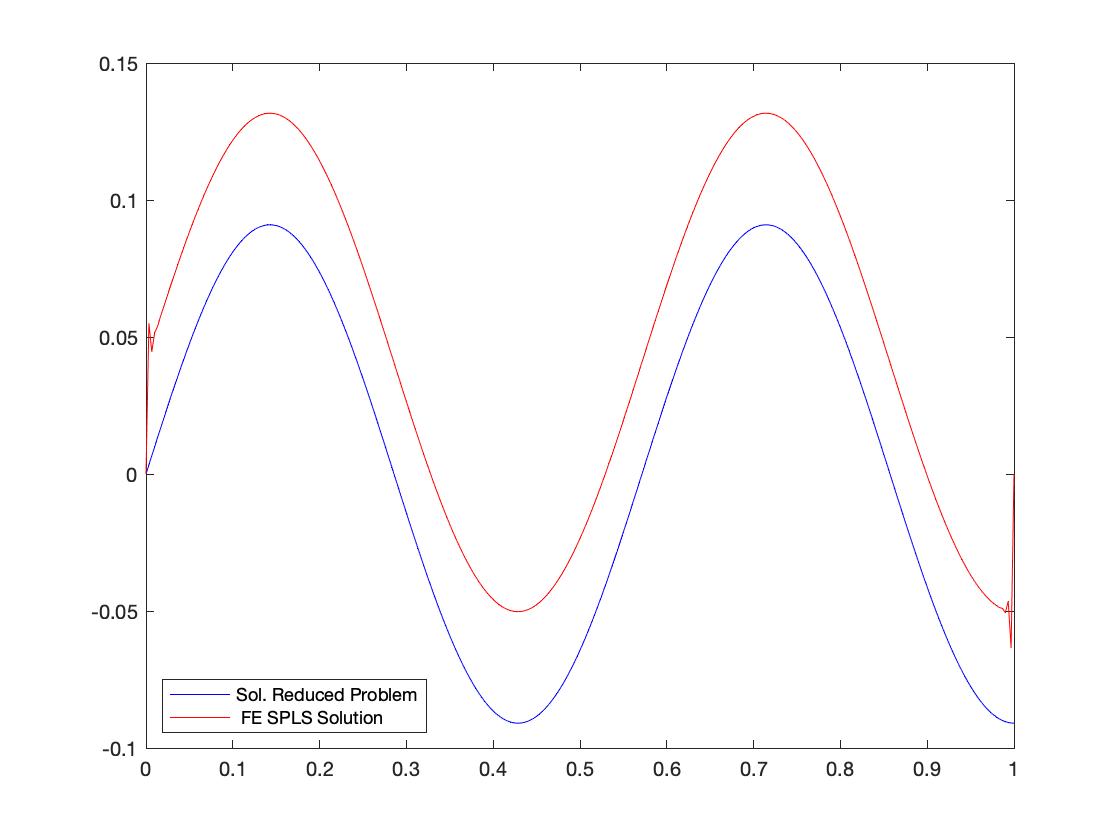}\\
{Fig.10 $n=300, \varepsilon=10^{-4}$} 
\end{center}
}
\vspace{0.1in}

\subsection{Petrov Galerkin (PG) with bubble enriched test space $V_h$}\label{ssec:PGbubbles}

We consider   $b(v,u) :=\varepsilon\, a_0(u, v)+(u',v) $  for all $ u, v \in V:=H^{1}_0(0,1)$. 
The second equation in \eqref{SPLS4model2} implies $w=0$, and the SPLS problem reduces to:  Find $u \in Q$ such that 
\begin{equation}\label{PG4model}
b( v, u) = (f,v ) \ \Forall  v \in V,\\
\end{equation}
which is a Petrov-Galerkin method for solving \eqref{eq:1d-model}.

One of the well known  Petrov-Galerkin discretization of the model problem \eqref{PG4model} with $\M_h= span\{ \varphi_j\}_{j = 1}^{n-1}$ consists of modifying the test space such that diffusion is created from the reaction therm.  This is also known as an {\it up-winding}  finite element scheme, see Sectioin 2.2 in \cite{roos-stynes-tobiska-96}. We define the test space $V_h$, by introducing first  a  bubble function for each interval $[x_{i-1}, x_i], i=1,2, \cdots, n$:
 \[
B_i:= 4\, \varphi_{i-1}\, \varphi_i, \ \ i=1,2, \cdots, n,
\] 
which is supported in $[x_{i-1}, x_i]$. The discrete test space $V_h$ is 
\[
V_h:= span \{ \varphi_j  + B_{j}-B_{j+1}\}_{j = 1}^{n-1}. 
\]
We note that both $\M_h$ and $V_h$ have dimension $n-1$ and, in a more general approach the test functions can be defined using  up-winding parameters $\sigma_i >0$ to get $V_h:= span \{ \varphi_j  + \sigma_i( B_{j}-B_{j+1})\} _{j = 1}^{n-1}$. 

\subsubsection{Variational formulation  and matrices}
The Petrov Galerkin discretization for 
\eqref{eq:1d-model} is: Find $u_h \in \M_h$ such that 
\begin{equation}\label{PG4model-h}
b( v_h, u_h) = (f,v_h) \ \Forall  v_h \in V_h. 
\end{equation}
We look for 
\[
u_h= \sum_{j=1}^{n-1} \alpha_j \varphi_j,
\]
and consider a generic test function 
\[
v_h= \sum_{i=1}^{n-1} \beta_i \varphi_i + \sum_{i=1}^{n-1}  \beta_i (B_i - B_{i+1}) = \sum_{i=1}^{n-1} \beta_i \varphi_i + \sum_{i=1}^{n}  (\beta_i - \beta_{i-1}) B_{i},
\]
where, we define $\beta_0=\beta_n=0$. Denoting,
\[
B_h:=\sum_{i=1}^{n}  (\beta_i - \beta_{i-1}) B_{i},  \ \text{and } \  w_h:=  \sum_{i=1}^{n-1} \beta_i \varphi_i ,
\]
we have 
\[
v_h=w_h + B_h.
\]
We note that for a generic bubble function $B$  with support $[a, b]$ we have
\[
B:= \frac{4}{(b-a)^2} (x-a) (b-x), \ \text{with}  \ a<b,\  \text{and}
\]
\begin{equation} \label{eq:B-prop}
\int_a^b B(x)\, dx  = \frac{2 (b-a)}{3}, \ 
\int_a^b B'\, dx  =0, \ 
\int_a^b (B')^2\, dx  =\frac{16}{3 (b-a)}.
\end{equation}
Using the above formulas, the fact that $u'_h, w'_h $  are constant  on each of  the intervals $[x_{i-1}, x_i]$,  and that $w'_h= \frac{\beta_i -\beta_{i-1} }{h}$ on $[x_{i-1}, x_i]$,  we obtain
\[
(u'_h, B_h) = \sum_{i=1}^{n}\int_{x_{i-1}}^{x_i}  u'_h (\beta_i - \beta_{i-1}) B_{i}=
 \sum_{i=1}^{n} u'_h \,  w'_h \int_{x_{i-1}}^{x_i}  B_{i} =\frac{2h}{3}  \sum_{i=1}^{n} \int_{x_{i-1}}^{x_i}  u'_h  w'_h. 
\]
Thus
\begin{equation} \label{eq:upBh}
(u'_h, B_h) =\frac{2h}{3}  (u'_h, w'_h), \ \text{where} \  v_h=w_h + B_h.
\end{equation}

In addition, 
\[
(u'_h, B'_i) =0 \ \text{for all} \ i=1, 2, \cdots,  n, \text{hence} 
\]
\begin{equation} \label{eq:upBph}
 (u'_h, B'_h) =0,  \text{for all} \  u_h \in \M_h, v_h=w_h + B_h \in V_h.
\end{equation}
From  \eqref{eq:upBh} and  \eqref{eq:upBph}, for any $u_h \in \M_h, v_h=w_h + B_h \in V_h$ we get
\begin{equation} \label{eq:bPG}
 b(v_h, u_h) = \left (\varepsilon + \frac{2h}{3}\right )  (u'_h, w'_h) +  (u'_h, w_h).
\end{equation}
Thus, adding the bubble part to the test space leads to the extra diffusion term $  \frac{2h}{3}  (u'_h, w'_h)$ with  $\frac{2h}{3} >0$  matching the sign of the coefficient of $u'$ in  \eqref{eq:1d-model}. It is also interesting to note  that  only the linear part of $v_h$ appears in expression of $ b(v_h, u_h)$.  The  functional  $v_h \to (f, v_h)$ can  be also viewed as functional only of the linear part $w_h$. Indeed, using the splitting $v_h=w_h + B_h $  and that $B_h:=\sum_{i=1}^{n}  (\beta_i - \beta_{i-1}) B_{i}$ we get
\[
(f, v_h) =(f, w_h) + (f, \sum_{i=1}^{n}  h w'_h B_i) =(f, w_h) +h\,  (f, w'_h  \sum_{i=1}^{n}   B_i). 
\]
The variational formulation of the up-winding  Petrov-Galerkin method can be reformulated as: Find $u_h \in \M_h$ such that 
\begin{equation}\label{PG4model-hR}
 \left (\varepsilon + \frac{2h}{3}\right )  (u'_h, w'_h) +  (u'_h, w_h) = (f, w_h )  + h\,  (f, w'_h  \sum_{i=1}^{n}   B_i),  w_h \in M_h. 
\end{equation}
 The reformulation allows for  a new error analysis  using an optimal test norm, see e.g. \cite{BHJ, BHJ22, BHO}, and for comparison with the  known {\it stream-line diffusion} (SD) method  of discretization  that is reviewed in the next section. 
 
 For the analysis of the method, using \eqref{eq:upBph} and the last part of \eqref{eq:B-prop},  we  note that for any $v_h=w_h + B_h \in V_h$ we have 
 \[
 \begin{aligned}
 (v'_h, v'_h) & =(w'_h + B'_h, w'_h + B'_h) = (w'_h, w'_h) + ( B'_h, B'_h) =\\ 
 & =(w'_h, w'_h) +  \sum_{i=1}^{n} (\beta_i-\beta_{i-1})^2 (B'_i, B'_i)=\\
 & =(w'_h, w'_h)   + \frac{16h}{3}  \sum_{i=1}^{n} \left (\frac{\beta_i-\beta_{i-1}}{h} \right )^2 = \\
 & =(w'_h, w'_h)   +  \frac{16}{3}  \sum_{i=1}^{n} \left (\int_{x_{i-1}}^{x_i} (w'_h)^2 \right )^2 =(w'_h, w'_h)   +  \frac{16}{3} (w'_h, w'_h). 
 \end{aligned}
 \]
 
Consequently,
\begin{equation} \label{eq:vhwh}
|v_h|^2 = \frac{19}{3} |w_h|^2. 
\end{equation}
Using the reformulation \eqref{PG4model-hR} the  linear system  to be solved is 
\begin{equation}\label{1d-PG-ls}
\left ( \left (\frac{\varepsilon}{h} +  \frac{2h}{3} \right ) S+ C \right )\, U = F_{PG}, 
\end{equation}
where \(U,F_{PG}\in\R^{n-1}\)  with:
 \[
 U:=\begin{bmatrix}u_1\\u_2\\\vdots\\u_{n-1}\end{bmatrix},\quad F_{PG}:= \begin{bmatrix}(f,\varphi_1)\\ (f,\varphi_2)\\\vdots \\ (f,\varphi_{n-1})\end{bmatrix} + 
 \begin{bmatrix} (f, B_1 -B_2) 
 \\  (f, B_2-B_3)\\ \vdots \\  (f, B_{n-1} -B_n) \end{bmatrix}, 
\]
and $S, C$ are the matrices defined at the beginning of this section.
Numerical tests, show that this method  does not lead to any kind of non-physical oscillations. 

\subsection{Stream line diffusion (SD)  discretization} \label{ssec:SD} 
The classical way to introduce this method can be found in e.g., \cite{hughes-brooks79, brezzi-marini-russo98}. For our model problem, we  present a simple way to introduce and relate the method with the up-winding PG method. 
We take  $\M_h= V_h= span\{ \varphi_j\}_{j = 1}^{n-1}$ and consider the  {\it stream line diffusion method} for solving  
\eqref{eq:1d-model}: Find $u_h \in \M_h$ such that 
\begin{equation}\label{SD4model-h}
b_{sd} (w_h, u_h) = F_{sd}(w_h) \ \Forall  w_h \in V_h, 
\end{equation} 
where 
\[
b_{sd} (w_h, u_h):= \varepsilon\, (u'_h, w'_h)+(u'_h,w_h) +  \sum_{i=1}^{n} \delta_i \int_{x_{i-1}}^{x_i} u'_h w'_h
\]
with $\delta_i >0$ weight parameters, and 
\[
F_{sd}(w_h) := (f, w_h) + \sum_{i=1}^{n} \delta_i \int_{x_{i-1}}^{x_i} f(x)\,  w'_h\, dx. 
\]
In practice $\delta_i$'s  are chosen proportional with $x_i-x_{i-1}=h$. 

For the choice 
\[
\delta_i = \frac{2h}{3}, \  i=1,2,\cdots, n,
\]
and arbitrary  $w_h, u_h \in \M_h =V_h$ the bilinear form $b_{sd}$ becomes 
\[
b_{sd} (w_h, u_h)=b(w_h,u_h) =\left (\varepsilon + \frac{2h}{3}\right )  (u'_h, w'_h) +  (u'_h, w_h),
\]
and the the corresponding right hand side  functional  $F_{sd}$ is 
\begin{equation}\label{eq:RHS-sd}
F_{sd}(w_h)  = (f, w_h) + \frac{2h}{3}(f,w'_h), \ w_h \in V_h. 
\end{equation} 
Thus, by choosing the appropriate  weights, the (up-winding)  PG  and SD discretization methods  lead to the the same stiffness matrix. Comparing the right hand sides of 
\eqref{PG4model-hR} and \eqref{eq:RHS-sd} we note that the two methods produce the same system (solution)  if and only if 
\begin{equation}\label{eq:RHS=}
(f, w'_h  \sum_{i=1}^{n}   B_i)=  \frac{2}{3}(f,w'_h), \ \text{for all} \ w_h \in V_h.
\end{equation} 
This is a  feasible condition, as 
\[
 \int_0^1 \sum_{i=1}^{n}   B_i = n\, \frac{2h}{3}= \frac{2}{3}.
 \]
In fact, the condition \eqref{eq:RHS=}  is satisfied for $f=1$. In this case, both sides of \eqref{eq:RHS=}   are zero. 
In general, we expect that, for certain  error norms, the PG to perform better. 
It is known, \cite{bartels16,quarteroni-sacco-saleri07,roos-stynes-tobiska-96} that the error estimate for the SD method is defined using a special SD-norm that, in the one dimensional case with  same  weights $\delta_i =\delta$, becomes
\[
\|v\|^2_{sd} = \varepsilon |v|^2 + \delta  |v|^2. 
\]
For a fair comparison with the PG method we take $\delta = \frac{2h}{3}$.  For the continuous solution $u$ of 
\eqref{eq:1d-model} and the discrete solution $u_h$ of \eqref{SD4model-h}, we have 

\begin{equation}\label{eq:EE-sd}
\|u-u_h\|_{sd} \leq  c_{sd}\,  h^{3/2} \|u''\|.
\end{equation}

For comparison of the implementation of the two methods we can compare also the load vector $F_{PG}$ defined above with  the load vector for the SD method:
\[
F_{SD}:= \begin{bmatrix}(f,\varphi_1)\\ (f,\varphi_2)\\\vdots \\ (f,\varphi_{n-1})\end{bmatrix} + 
 \frac{2h}{3} \begin{bmatrix} (f, \varphi'_1) 
 \\  (f, \varphi'_2)\\ \vdots \\  (f, \varphi'_n) \end{bmatrix}. 
\]

\section{Numerical experiments}\label{sec:NR} 

We will compare numerically the standard linear finite element with the $P^1-P^2$-SPLS formulation, and the Streamline Diffusion with Petrov-Galerkin in a variety of norms. In order to compact the tables, we will use the notation $E_{i,method}$ where $i = 0$ is the $L^2$ error $||u-u_h||$, and $i=1$ is the $H^1$ error $|u-u_h|$. For the methods, we have $L$ for standard linear, $S$ for SPLS, $SD$ for Streamline Diffusion, and $P$ for Petrov-Galerkin. 

\subsection{Standard linear  versus SPLS discretization}\label{ssec:NRlin} 
We note here that even in the case when the solution is independent of  $\varepsilon$, the standard finite element solution can exhibit non-physical oscillations, see e.g.. Figure \ref{fig:LinearDino} for the exact solution $u(x)= -x^3 + 1.5 x^2 - 0.5$ and the behavior depends on the parity of $n$-the number of subintervals on $[0, 1]$.

\begin{figure}[h!]
\begin{center}
\includegraphics[width=0.5\textwidth]{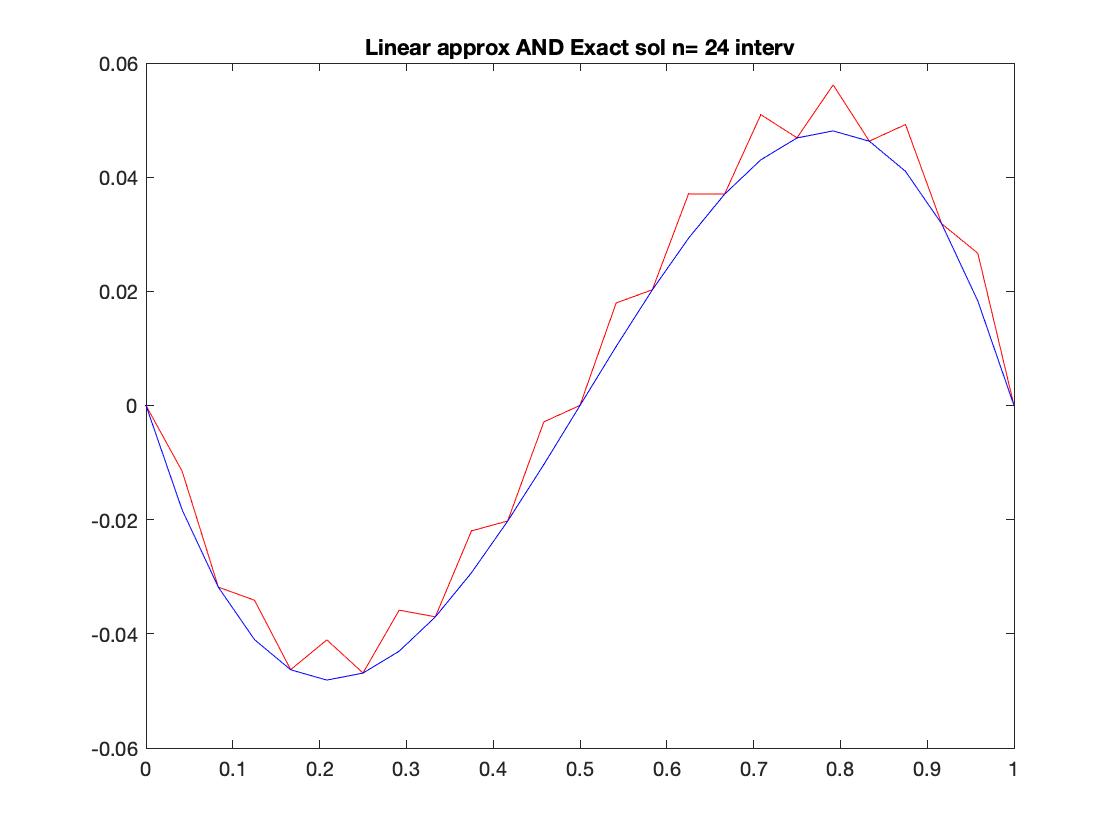}\includegraphics[width=0.5\textwidth]{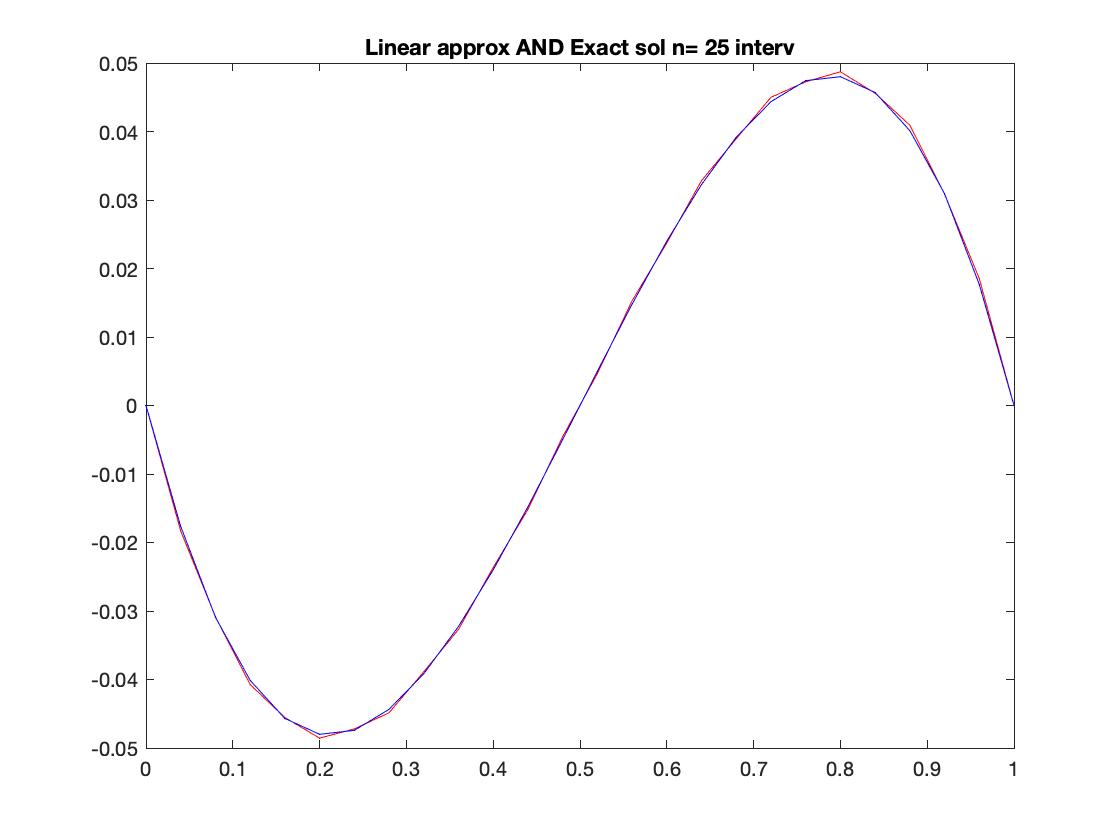}
\end{center}
\caption{ $\varepsilon = 10^{-4}$. Left: $n=24$, Right: $n=25$}
\label{fig:LinearDino}
\end{figure}

For the first test, we take $f = 1-2x$ which satisfies the condition $\overline{f}=0$. We will compare the standard linear finite element method and the SPLS formulation in this case for two values of $\varepsilon$ that are at least 2 orders of magnitude greater than $h$ at the finest level. Table \ref{table:Test1} contains the errors of the two methods over six refinements where $h_i = 2^{-i-5}$. We can see that for this problem, both discretizatin  perform well. The explanation for this nice behavior is that, in the case $\overline{f}=0$, the interpolant has good approximation properties on the  uniform mesh, see the Appendix. We also note that  at all levels for both values of $\varepsilon$ and both errors, SPLS produces smaller error. 

\begin{table}[h!] 
\begin{center} 
\begin{tabular}{|*{5}{c|}} 
\hline 
\multicolumn{1}{|c|}{\multirow{2}{*}{\parbox{1.2cm}{\centering Level/$\varepsilon$}}}& \multicolumn{4}{c|}{$10^{-6}$}\\
\cline{2-5} 
&$E_{1,L}$ &$E_{1,S}$&$E_{0,L}$& $E_{0,S}$\\ \hline 
1&0.289&0.144&0.046&0.011\\ 
\hline 
2&0.144&0.072&0.011&0.003\\ 
\hline 
3&0.072&0.036&0.003&0.001\\ 
\hline 
4&0.036&0.018&0.001&1.8e-4\\ 
\hline 
5&0.018&0.009&1.7e-4&4.4e-5\\ 
\hline 
6&0.009&0.005&4.4e-5&1.0e-5\\ 
\hline 
Order & 1& 1& 2& 2\\
\hline
\multicolumn{1}{|c|}{\multirow{2}{*}{\parbox{1.2cm}{\centering Level/$\varepsilon$}}}& \multicolumn{4}{c|}{$10^{-10}$}\\
\cline{2-5} 
&$E_{1,L}$ &$E_{1,S}$&$E_{0,L}$& $E_{0,S}$\\ \hline 
1&0.289&0.144&0.046&0.011\\ 
\hline 
2&0.144&0.072&0.011&0.003\\ 
\hline 
3&0.072&0.036&0.003&0.001\\ 
\hline 
4&0.036&0.018&0.001&1.8e-4\\ 
\hline 
5&0.018&0.009&1.8e-4&4.5e-5\\ 
\hline 
6&0.009&0.005&4.5e-5&1.1e-5\\ 
\hline 
Order &1 &1 & 2& 2\\
\hline
\end{tabular} 
\caption{L vs. SPLS: $f(x) = 1-2x$} 
\label{table:Test1}
\end{center} 
\end{table}

Table \ref{table:Test1.5} contains errors for standard linear finite elements and SPLS for $f(x) = 2x$ measured in a balanced norm $||\cdot||_B^2 = \varepsilon |\cdot|^2 + ||\cdot||^2$. As this choice of right hand side does not satisfy the condition that $\overline{f} = 0$ we can expect the results to be less impressive than those of Table \ref{table:Test1}. In Table \ref{table:Test1.5} we can see for larger values of $\varepsilon$ the magnitudes of the errors are comparable for both methods. As $\varepsilon$ decreases, while the standard linear elements appear to do better as they attain second order convergence, this is somewhat misleading as the errors are significantly larger than those of SPLS. The SPLS method appears to have a stagnation of error, which is due in part to the overall shift of the approximation which can be seen in the right plot of figure \ref{fig:LinearvsSPLS}. It can be also seen in the previously mentioned figure that SPLS does a better job at capturing the behavior of the exact solution aside from the shift.

Table \ref{table:Test1.5P2} contains errors in the $H^1$, $L^2$, and balanced norms for the SPLS approximation for $f(x)=2x$ with accounting for the expected shift, as presented in Section \ref{sec:MMD}. In that test, the $u_h$ that we measure the error with is taken to be $u_h + \overline{f}/2 = u_h+1/2$. The table shows that for small $\varepsilon$, the shifted SPLS approximation is able to display some convergence order, as anticipated in Section \ref{sec:MMD}. This degeneracy of convergence order may be attributed to the small oscillatory behavior that occurs near both boundaries. The orders improve  if the errors are computed on  the interval $[3h , 1-3h]$, but a rigorous analysis of the shift conjecture and its implications remains to be  investigated.


\begin{table}[h!] 
\begin{center} 
\begin{tabular}{|*{5}{c|}} 
\hline 
\multicolumn{1}{|c|}{\multirow{2}{*}{\parbox{1.2cm}{\centering Level/$\varepsilon$}}}& \multicolumn{4}{c|}{$10^{-4}$}\\
\cline{2-5} 
&$||u-u_{h,L}||_{B}$&Order&$||u-u_{h,S}||_{B}$&Order\\ 
\hline 
1&9.75e-01&0.00&4.97e-01&0.00\\ 
\hline 
2&7.26e-01&0.43&4.91e-01&0.02\\ 
\hline 
3&7.04e-01&0.04&4.77e-01&0.04\\ 
\hline 
4&6.76e-01&0.06&4.86e-01&-0.03\\ 
\hline 
5&6.95e-01&-0.04&5.88e-01&-0.27\\ 
\hline 
6&6.29e-01&0.14&5.76e-01&0.03\\ 
\hline 
\multicolumn{1}{|c|}{\multirow{2}{*}{\parbox{1.2cm}{\centering Level/$\varepsilon$}}}& \multicolumn{4}{c|}{$10^{-8}$}\\
\cline{2-5} 
&$||u-u_{h,L}||_{B}$&Order&$||u-u_{h,S}||_{B}$&Order\\ 
\hline 
1&7.05e+03&0.00&5.02e-01&0.00\\ 
\hline 
2&1.76e+03&2.00&5.01e-01&0.00\\ 
\hline 
3&4.40e+02&2.00&5.01e-01&0.00\\ 
\hline 
4&1.10e+02&2.00&5.00e-01&0.00\\ 
\hline 
5&2.75e+01&2.00&5.00e-01&0.00\\ 
\hline 
6&6.91e+00&1.99&5.00e-01&0.00\\ 
\hline 
\end{tabular} 
\caption{L vs. SPLS: $f(x) = 2x$} 
\label{table:Test1.5}
\end{center} 
\end{table}

\begin{table}[h!] 
	\begin{center} 
		\begin{tabular}{|*{7}{c|}} 
			\hline 
			\multicolumn{1}{|c|}{\multirow{2}{*}{\parbox{1.2cm}{\centering Level/$\varepsilon$}}}& \multicolumn{6}{c|}{$10^{-8}$}\\
			\cline{2-7} 
			&$E_{1,S}$&Order&$E_{2,S}$&Order&$||u-u_h||_B$& Order\\ 
			\hline 
			1&9.35e+00&0.00&6.97e-02&0.00&6.97e-02&0.00\\ 
			\hline 
			2&1.32e+01&-0.50&4.93e-02&0.50&4.93e-02&0.50\\ 
			\hline 
			3&1.87e+01&-0.50&3.49e-02&0.50&3.49e-02&0.50\\ 
			\hline 
			4&2.64e+01&-0.50&2.46e-02&0.50&2.48e-02&0.49\\ 
			\hline 
			5&3.74e+01&-0.50&1.74e-02&0.50&1.78e-02&0.48\\ 
			\hline 
			6&5.30e+01&-0.50&1.23e-02&0.50&1.34e-02&0.41\\ 
			\hline 
		\end{tabular} 
		\caption{SPLS: $f(x) = 2x$ with shift} 
		\label{table:Test1.5P2}
	\end{center} 
\end{table} 

\begin{figure}[h!]
\begin{center}
\includegraphics[width=0.5\textwidth]{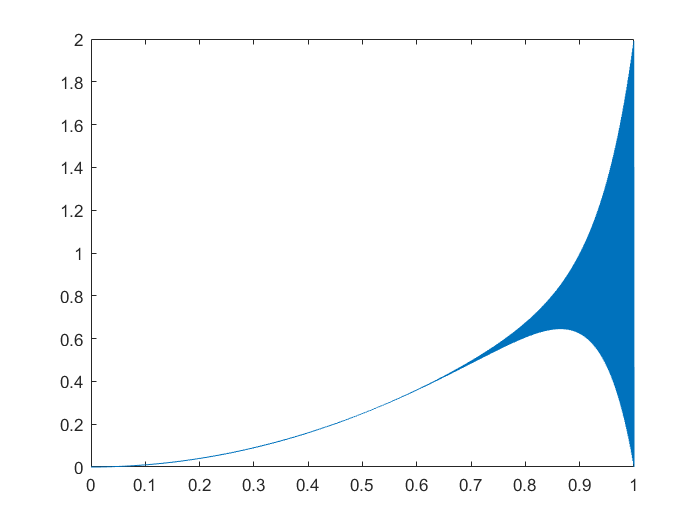}\includegraphics[width=0.5\textwidth]{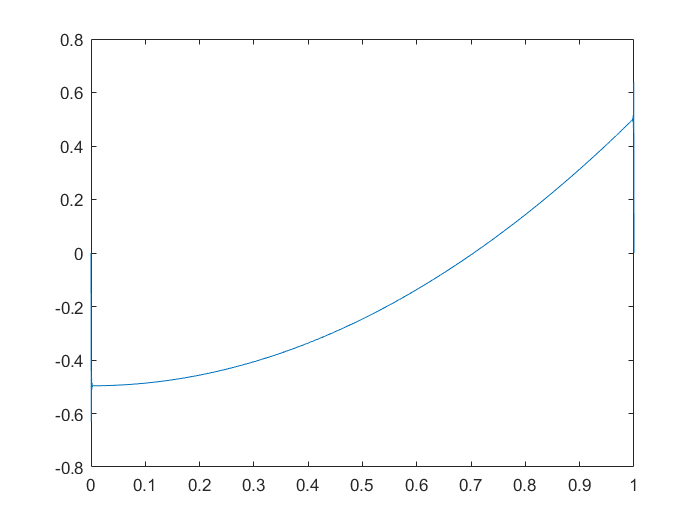}
\end{center}
\caption{ $\varepsilon = 10^{-6}$. Left: Linear, Right: SPLS }
\label{fig:LinearvsSPLS}
\end{figure}

\subsection{Streamline Diffusion  versus PG discretization}


For the second test, we take $f = 2x$ and compare Streamline Diffusion and Petrov-Galerkin. In this case, the exact solution will have a boundary layer at $x = 1$ of width $|\varepsilon\log(\varepsilon)|$. We will also include two tables for this test. Table \ref{table:Test2} compares the errors of the Streamline Diffusion approximation $u_{h,sd}$ with the Petrov-Galerkin approximation $u_{h,pg}$ in the SD norm $||u-u_h||_{sd}$. As we can see in Table \ref{table:Test2}, the expected order for streamline diffusion is observed. Further, the same order is attained by Petrov-Galerkin with errors of smaller magnitude. 

\begin{table}[h!] 
\begin{center} 
\begin{tabular}{|*{5}{c|}} 
\hline 
\multicolumn{1}{|c|}{\multirow{2}{*}{\parbox{1.2cm}{\centering Level/$\varepsilon$}}}& \multicolumn{4}{c|}{$10^{-4}$}\\
\cline{2-5} 
&$||u-u_{h,sd}||_{sd}$ &Order&$||u-u_{h,pg}||_{sd}$&Order\\ 
\hline 
1&1.56e-02&0.00&1.54e-02&0.00\\ 
\hline 
2&2.57e-03&2.60&2.51e-03&2.62\\ 
\hline 
3&5.45e-04&2.24&4.92e-04&2.35\\ 
\hline 
4&1.05e-04&2.37&4.21e-05&3.55\\ 
\hline 
5&3.86e-05&1.45&1.54e-05&1.46\\ 
\hline 
6&1.45e-05&1.41&5.78e-06&1.41\\  
\hline 
\multicolumn{1}{|c|}{\multirow{2}{*}{\parbox{1.2cm}{\centering Level/$\varepsilon$}}}& \multicolumn{4}{c|}{$10^{-8}$}\\
\cline{2-5} 
&$||u-u_{h,sd}||_{sd}$ &Order&$||u-u_{h,pg}||_{sd}$&Order\\ 
\hline 
1&1.46e-02&0.00&1.45e-02&0.00\\ 
\hline 
2&2.16e-03&2.76&2.09e-03&2.79\\ 
\hline 
3&3.98e-04&2.44&3.16e-04&2.72\\ 
\hline 
4&1.01e-04&1.97&4.04e-05&2.97\\ 
\hline 
5&3.60e-05&1.50&1.43e-05&1.50\\ 
\hline 
6&1.27e-05&1.50&5.06e-06&1.50\\ 
\hline 
\end{tabular} 
\caption{SD vs. PG: $f(x) = 2x$}
\label{table:Test2}
\end{center} 
\end{table} 

In Table \ref{table:Test2} and Table \ref{table:Test3},  the SD and PG approximations are  compared in the SD norm $||u-u_h||_{*,h}$, and the balanced norm $||u-u_h||_B$ for $f(x) = 2x$. These tables show that overall, the PG approximation performs better than the SD method for both choices of norms. More interestingly, for the balance norm with small $\varepsilon$ the PG method exhibits  higher order of convergence. 

\begin{table}[h!] 
\begin{center} 
\begin{tabular}{|*{5}{c|}} 
\hline 
\multicolumn{1}{|c|}{\multirow{2}{*}{\parbox{1.2cm}{\centering Level/$\varepsilon$}}}& \multicolumn{4}{c|}{$10^{-4}$}\\
\cline{2-5} 
&$||u-u_{h,sd}||_{B}$ &Order&$||u-u_{h,pg}||_{B}$&Order\\ 
\hline 
1&1.12e-02&0.00&2.28e-03&0.00\\ 
\hline 
2&5.75e-03&0.96&3.97e-04&2.52\\ 
\hline 
3&2.93e-03&0.97&9.84e-05&2.01\\ 
\hline 
4&1.47e-03&0.99&1.13e-05&3.13\\ 
\hline 
5&7.38e-04&0.99&5.61e-06&1.01\\ 
\hline 
6&3.70e-04&1.00&2.80e-06&1.00\\ 
\hline 
\multicolumn{1}{|c|}{\multirow{2}{*}{\parbox{1.2cm}{\centering Level/$\varepsilon$}}}& \multicolumn{4}{c|}{$10^{-8}$}\\
\cline{2-5} 
&$||u-u_{h,sd}||_{B}$ &Order&$||u-u_{h,pg}||_{B}$&Order\\ 
\hline 
1&1.11e-02&0.00&1.60e-03&0.00\\ 
\hline 
2&5.74e-03&0.95&1.62e-04&3.31\\ 
\hline 
3&2.93e-03&0.97&1.65e-05&3.30\\ 
\hline 
4&1.47e-03&0.99&7.00e-07&4.55\\ 
\hline 
5&7.38e-04&0.99&1.82e-07&1.94\\ 
\hline 
6&3.70e-04&1.00&5.16e-08&1.82\\ 
\hline 
\end{tabular} 
\caption{SD vs. PG:$ f(x) = 2x$}
\label{table:Test3} 
\end{center} 
\end{table}





\section{Conclusion}\label{sec:conclusion} 
We compared four discretization methods  for a model convection-diffusion  problem.    Some concepts and observations we noted in the one dimensional case can be used to efficiently discretize and analyze multi-dimensional cases. One such observation is that if the {\it simplified  problem} ($\varepsilon \to 0$)  does not have a unique solution but a {\it particular discretization} we choose of the {\it  simplified problem} has unique solution exhibiting  non-physical oscillations, then the {\it chosen discretization for the original problem} is likely to produce non-physical oscillations.  To eliminate the non-physical solutions one can split the data $f=(f-\overline{f}) + \overline{f} $ and solve the two corresponding  problems for  the data $f-\overline{f}$ and $\overline{f}$. 

For the the model problem we considered, the best method turns out to be the upwinding PG method.  Even though we can view this PG method  as mixed method  with the test space a subspace of $C^0-P^2$-the test space  for  SPLS, the SPLS method is not performing better.  How the upwinding PG method can be extended and related  with other  SPLS  discretizations   in two or more dimensions, will  be  further investigated.   


 \section{Appendix}\label{sec:appendix} 
 We present stability estimates for the model problem  \eqref{eq:1d-model} that justify why in the case of compatibility case $\int_0^1 f(x)\, dx=0$ the standard $C^0-P^1$ or SPLS discretizations  lead to standard  approximation properties, Table 1.  
\subsection{Stability  of the 1D  Convection-Difussion model problem}

The results presented in this section might be well known in a more general setting. However,  we are able to provide   sharp norm estimates   for  the simplified PDE  \eqref{eq:1d-model}.  
We derive estimates for the derivatives  that are used in the next section for establishing approximation properties for the  piece-wise linear interpolant.   All results of this appendix refer to the solution $u=u(x)$ of the  problem \eqref{eq:1d-model}. We assume next  that $f$ is continuous on $[0, 1]$.  The Green's function for this problem allows for the representation
\begin{equation}\label{eq:GreensFnc}
	u(x) = \int_0^1 G(x,s)f(s)\,ds.
\end{equation}
where $G(x,s)$ can be explicitly determined by using standard integration arguments, and 
  \[ 
 G(x,s)=\frac{1}{e^\frac{1}{\varepsilon}-1}\begin{cases}(e^\frac{1}{\varepsilon}-e^\frac{x}{\varepsilon})(1-e^{-\frac{s}{\varepsilon}}),&0\leq s < x\\(e^\frac{x}{\varepsilon}-1)(e^\frac{1-s}{\varepsilon}-1),&x\leq s\leq 1.\end{cases}
 \] 
  Define $u_1(x)$ to be the solution for $f(x) = 1$, or equivalently 
 $$
 u_1(x) = \int_0^1 G(x,s)\,ds = x- \frac{e^{\frac{x}{\varepsilon}} -1}{e^{\frac{1}{\varepsilon}} -1}.
 $$
 
We let $f_{\min}$ and $f_{\max}$  denote the minimum and maximum (respectively) of $f$ on $[0, 1]$, and note that,  
for any  fixed $x\in (0, 1)$, the function 
\[
s\, \to G(x,s), \ s\in [0, 1],
\]
is increasing on $[0, x]$, and decreasing on $[x, 1]$, thus for any $s, x\in [0, 1]$, we have

\begin{equation}\label{eq:bounds4G}
0\leq G(x,s)\leq G(x,x) =\frac{(e^{\frac{1}{\varepsilon}} -e^{\frac{x}{\varepsilon}})(1-e^{\frac{-x}{\varepsilon}})}{e^{\frac{1}{\varepsilon}}-1} \leq \frac{e^{\frac{1}{2\varepsilon}}-1}{e^{\frac{1}{2\varepsilon}}+1}:=G_\infty<1.
\end{equation}

For this problem, we can prove the following inequalities relating the the point values $u(x), u_1(x)$ and $f$.
\begin{theorem}
If \(f\in L^\infty(0,1)\) and \(u\) is the solution to \eqref{eq:1d-model} then:
\begin{enumerate}
\item [i)] \(|u(x)|\leq \|f\|_\infty u_1(x)\);
\item [ii)] \(f_{\min}\, u_1(x)\leq u(x)\leq f_{\max}\, u_1(x)\);
\item [iii)] $|u(x)|\leq G(x,x)  \|f\|_{L^1(0,1)}$ and consequently \\
 \(\|u\|_\infty\leq G_\infty \|f\|_{L^1(0,1)} \leq G_\infty \|f\|_{L^2(0,1)}\).
\end{enumerate}
\end{theorem}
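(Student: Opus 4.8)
The plan is to derive all three inequalities directly from the Green's function representation \eqref{eq:GreensFnc} together with the pointwise information about $G$ already collected in \eqref{eq:bounds4G}; no additional machinery is needed, since the non-negativity of the kernel and the uniform bound $G(x,s)\le G(x,x)\le G_\infty$ are exactly what drive each estimate. In each case the mechanism is the monotonicity of the integral: if $g\le h$ pointwise and the kernel is non-negative, then integrating against $G(x,\cdot)$ preserves the inequality.

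For part (i), I would begin from $|u(x)|=\left|\int_0^1 G(x,s)f(s)\,ds\right|$ and move the absolute value inside the integral. Because $G(x,s)\ge 0$ by \eqref{eq:bounds4G}, this yields $|u(x)|\le \int_0^1 G(x,s)\,|f(s)|\,ds\le \|f\|_\infty\int_0^1 G(x,s)\,ds$, and the last integral is by definition $u_1(x)$. For part (ii) the same non-negativity lets me multiply the pointwise bounds $f_{\min}\le f(s)\le f_{\max}$ by $G(x,s)\ge 0$ and integrate over $s\in[0,1]$, producing $f_{\min}\,u_1(x)\le u(x)\le f_{\max}\,u_1(x)$ directly.

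For part (iii), I would instead bound the kernel uniformly in $s$ by its value on the diagonal, using $G(x,s)\le G(x,x)$ from \eqref{eq:bounds4G}. This gives $|u(x)|\le G(x,x)\int_0^1|f(s)|\,ds=G(x,x)\,\|f\|_{L^1(0,1)}$. Taking the supremum over $x\in[0,1]$ and invoking the further bound $G(x,x)\le G_\infty$ yields $\|u\|_\infty\le G_\infty\,\|f\|_{L^1(0,1)}$, and the final inequality $\|f\|_{L^1(0,1)}\le\|f\|_{L^2(0,1)}$ is just Cauchy--Schwarz on the unit-length interval $[0,1]$. I do not anticipate a genuine obstacle in the proof proper: the only delicate point is the chain of estimates in \eqref{eq:bounds4G} itself, namely that $s\mapsto G(x,s)$ attains its maximum at $s=x$ and that $\sup_x G(x,x)\le G_\infty<1$; but this has already been established before the statement, so each of the three items reduces to a single application of the monotonicity of the integral.
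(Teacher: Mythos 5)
Your proposal is correct and follows essentially the same route as the paper: all three parts are derived from the Green's function representation \eqref{eq:GreensFnc} using the non-negativity of $G$, the bound $G(x,s)\le G(x,x)\le G_\infty$ from \eqref{eq:bounds4G}, and Cauchy--Schwarz for $\|f\|_{L^1(0,1)}\le\|f\|_{L^2(0,1)}$. No gaps.
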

\begin{proof}
The proofs are base on the the definition of $u_1$ and the inequalities of  the Green's function \eqref{eq:bounds4G}.
\begin{enumerate}
\item [i)]  We have:
    \begin{align*}
    |u(x)|&=\left|\int_0^1G(x,s)f(s)ds\right| 
    \leq \int_0^1G(x,s)|f(s)|ds \\
    &\leq \|f\|_\infty \int_0^1G(x,s)ds 
    =\|f\|_\infty u_1(x).
    \end{align*}
\item  [ii)]  Since $f_{\min} \leq f(s)\leq f_{\max}$ we have 
  \[
  \begin{aligned} 
 & f_{\min}\, G(x,s)  \leq f(s)G(x,s)\leq f_{\max}\, G(x,s), \ \text{which implies} \\ 
& f_{\min}\int_0^1G(x,s)ds  \leq \int_0^1f(s)G(x,s)ds \leq f_{\max}\int_0^1G(x,s)ds , \  \text{consequently} \\
    & f_{\min}\, u_1(x)\leq u(x)\leq f_{\max}\, u_1(x).
    \end{aligned}
 \]   
 
\item  [iii)]  First we observe that:
\[
  |u(x)| \leq \int_0^1G(x,s)\, |f(s)|ds  \leq \int_0^1G(x,x)\, |f(s)|ds = G(x,x)\,  \int_0^1 |f(s)|ds.   
 \]  
 Consequently, 
 \[ 
 \|u\|_\infty \leq G_\infty \|f\|_{L^1(0,1)}
 \]
The last part follows from  $\|f\|_{L^1(0,1)} \leq \|f\|_{L^2(0,1)}$.

\end{enumerate}
\end{proof}
\begin{theorem}\label{th:upBound}
If \(u\) is the solution to \eqref{eq:1d-model} and \(f(x)\in C^0([0,1])\) satisfies \(\int_0^1f(s)ds=0\) (i.e. has average 0), then \[|u'(x)|\leq\|f\|_\infty,\quad\forall x\in[0,1].\]
\end{theorem}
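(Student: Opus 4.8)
The plan is to work from the Green's function representation \eqref{eq:GreensFnc} rather than from the variational form, since $G$ is available explicitly. First I would differentiate in $x$. Because $G(x,\cdot)$ is only piecewise smooth, with a corner at $s=x$, I split the integral as $\int_0^x + \int_x^1$ and apply the Leibniz rule; the two boundary contributions are $G(x,x^-)f(x)$ and $-G(x,x^+)f(x)$, which cancel since $G$ is continuous across the diagonal. This leaves
\[
u'(x) = \int_0^1 \partial_x G(x,s)\, f(s)\, ds,
\]
with $\partial_x G$ understood piecewise. Writing $D := e^{1/\varepsilon}-1$, direct differentiation of the two branches of $G$ gives
\[
\partial_x G(x,s) = \frac{1}{\varepsilon D}
\begin{cases}
e^{(x-s)/\varepsilon} - e^{x/\varepsilon}, & 0\le s<x,\\
e^{(x+1-s)/\varepsilon} - e^{x/\varepsilon}, & x\le s\le 1.
\end{cases}
\]

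The decisive observation, and the sole place where the hypothesis $\int_0^1 f\,ds=0$ enters, is that both branches contain the same $s$-independent term $-\frac{1}{\varepsilon D}e^{x/\varepsilon}$. Since $\int_0^1 c(x)\,f(s)\,ds = 0$ for any quantity $c(x)$ independent of $s$, I may add $\frac{1}{\varepsilon D}e^{x/\varepsilon}$ inside the integral without changing its value, replacing $\partial_x G$ by the \emph{nonnegative} kernel
\[
K(x,s) := \partial_x G(x,s) + \frac{1}{\varepsilon D}e^{x/\varepsilon}
= \frac{1}{\varepsilon D}
\begin{cases}
e^{(x-s)/\varepsilon}, & 0\le s<x,\\
e^{(x+1-s)/\varepsilon}, & x\le s\le 1,
\end{cases}
\]
so that $u'(x)=\int_0^1 K(x,s)\,f(s)\,ds$ with $K\ge 0$. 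Whereas $\partial_x G$ changes sign at $s=x$, the shifted kernel $K$ does not, and this sign-definiteness is exactly what produces the sharp constant.

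With $K\ge 0$ I would then estimate
\[
|u'(x)| \le \|f\|_\infty \int_0^1 |K(x,s)|\,ds = \|f\|_\infty \int_0^1 K(x,s)\,ds,
\]
and finish with the elementary evaluation of $\int_0^1 K(x,s)\,ds$. Integrating the two exponentials separately gives $\frac{1}{D}(e^{x/\varepsilon}-1)$ over $[0,x]$ and $\frac{1}{D}(e^{1/\varepsilon}-e^{x/\varepsilon})$ over $[x,1]$; these telescope to $\frac{1}{D}(e^{1/\varepsilon}-1)=1$. Hence $|u'(x)|\le\|f\|_\infty$ for all $x\in[0,1]$.

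I expect the only genuine obstacle to be spotting the correct shift $c(x)=-\frac{1}{\varepsilon D}e^{x/\varepsilon}$. Without the mean-zero hypothesis one is stuck estimating $\int_0^1|\partial_x G(x,s)|\,ds$, and since $\partial_x G$ has a jump of size $1/\varepsilon$ across $s=x$ and changes sign there, that integral exceeds $1$ and depends on $\varepsilon$, giving no useful uniform bound. Recognizing that the mean-zero condition removes precisely the $s$-independent part of $\partial_x G$, converting it into a positive kernel of unit total mass, is the crux; the remaining differentiation and integration of exponentials are routine.
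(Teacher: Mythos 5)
Your proof is correct and takes essentially the same route as the paper's: differentiate the Green's function representation, use the mean-zero hypothesis to discard the $s$-independent part of $\partial_x G(x,s)$, and bound $|u'(x)|$ by $\|f\|_\infty$ times the integral of the remaining nonnegative kernel, which telescopes to $\frac{1}{e^{1/\varepsilon}-1}\left(e^{x/\varepsilon}-1+e^{1/\varepsilon}-e^{x/\varepsilon}\right)=1$. The only difference is presentational: you state explicitly the cancellation step where $\int_0^1 f(s)\,ds=0$ enters, which the paper absorbs silently into its first displayed formula for $u'(x)$.
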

\begin{proof}
Using  the explicit form of $G(x,s)$, we have
\begin{align*}
|u'(x)|&=\frac{e^\frac{x}{\varepsilon}}{e^\frac{1}{\varepsilon}-1}\left|\int_0^x\frac{1}{\varepsilon}e^{-\frac{s}{\varepsilon}}f(s)ds+\int_x^1\frac{1}{\varepsilon}e^{\frac{1-s}{\varepsilon}}f(s)ds\right| \\
&\leq \frac{e^\frac{x}{\varepsilon}}{e^\frac{1}{\varepsilon}-1}\left(\left|\int_0^x\frac{1}{\varepsilon}e^{-\frac{s}{\varepsilon}}f(s)ds\right|+\left|\int_x^1\frac{1}{\varepsilon}e^{\frac{1-s}{\varepsilon}}f(s)ds\right|\right).
\end{align*}
Estimating the two integrals
\begin{align*}
\left|\int_0^x\frac{1}{\varepsilon}e^{-\frac{s}{\varepsilon}}f(s)ds\right|&\leq\|f\|_\infty \int_0^x\frac{1}{\varepsilon}e^{-\frac{s}{\varepsilon}}ds \\
&=\|f\|_\infty(1-e^{-\frac{x}{\varepsilon}}) \\
\left|\int_x^1\frac{1}{\varepsilon}e^{\frac{1-s}{\varepsilon}}f(s)ds\right|&\leq\|f\|_\infty\int_x^1\frac{1}{\varepsilon}e^{\frac{1-s}{\varepsilon}}ds \\
&=\|f\|_\infty(e^\frac{1-x}{\varepsilon}-1),
\intertext{ leads to:}
|u'(x)|&\leq\|f\|_\infty\frac{e^\frac{x}{\varepsilon}}{e^\frac{1}{\varepsilon}-1}(1-e^{-\frac{x}{\varepsilon}}+e^\frac{1-x}{\varepsilon}-1) 
=\|f\|_\infty.
\end{align*}
\end{proof}
\begin{corollary}
Under the same assumptions of Theorem 2, we have that 
\begin{equation} \label{eq:upp}
|u''(x)|\leq\frac{2}{\varepsilon}\|f\|_\infty.
\end{equation} 

\end{corollary}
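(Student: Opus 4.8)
The plan is to avoid differentiating the Green's function representation a second time and instead read off $u''$ directly from the differential equation. Since $u$ solves $-\varepsilon u'' + u' = f$ on $(0,1)$, I can solve algebraically for the second derivative to obtain
\[
u''(x) = \frac{1}{\varepsilon}\bigl(u'(x) - f(x)\bigr).
\]
This reduces the task of bounding $u''$ to bounding $u'$ and $f$ separately, both of which are already under control under the present hypotheses.

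Next I would apply the triangle inequality and invoke Theorem \ref{th:upBound}, which—using precisely the zero-average assumption $\int_0^1 f(s)\,ds = 0$—gives the $\varepsilon$-independent bound $|u'(x)| \leq \|f\|_\infty$ for all $x \in [0,1]$. Combining this with the trivial pointwise estimate $|f(x)| \leq \|f\|_\infty$ yields
\[
|u''(x)| \leq \frac{1}{\varepsilon}\bigl(|u'(x)| + |f(x)|\bigr) \leq \frac{1}{\varepsilon}\bigl(\|f\|_\infty + \|f\|_\infty\bigr) = \frac{2}{\varepsilon}\|f\|_\infty,
\]
which is exactly the claimed inequality \eqref{eq:upp}.

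There is essentially no obstacle here: all the analytic difficulty has already been absorbed into Theorem \ref{th:upBound}, whose proof exploited the zero-average condition to extract the uniform bound on $u'$. The only point worth emphasizing is that the factor $1/\varepsilon$ in the expression for $u''$ is genuinely unavoidable and reflects the boundary-layer sharpening of the solution's derivative; the zero-average hypothesis is what keeps the constant in front of it bounded, since without it the bound on $u'$ from Theorem \ref{th:upBound} would itself deteriorate and the estimate on $u''$ would degrade correspondingly.
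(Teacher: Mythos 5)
Your proof is correct and follows essentially the same route as the paper: rearrange the equation to isolate $\varepsilon u''$, apply the triangle inequality, and invoke Theorem \ref{th:upBound} together with the trivial bound $|f(x)|\leq\|f\|_\infty$. Nothing to add.
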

\begin{proof}
Since \(u\) solves \[-\varepsilon u''(x)+u'(x)=f(x),\] we have that:
\begin{align*}
\varepsilon|u''(x)|&\leq |u'(x)|+|f(x)| \\
&\leq \|f\|_\infty+\|f\|_\infty,
\end{align*}
implying the desired result.
\end{proof}

\subsection{Linear interpolant approximation properties} 
For the special  case $\int_0^1 f(x)\, dx=0$ and $f \in C^0([0, 1]) $ we can use the  estimate of theorem \ref{th:upBound}, 
\[
|u'(x)|\leq\|f\|_\infty,\quad\forall x\in[0,1],
\]
to derive an approximation property for the linear interpolant  $\varepsilon$ (assuming that $f$ is independent of $\varepsilon$). 

First we will need an error estimate for the interpolant that does not require the second derivative of the function. 
We will assume  $u \in H^1([0, 1]) $ and $u' \in L^\infty([0,1])$   we consider the linear interpolantc  $0=x_0<x_1<\cdots < x_n=1$ with   $h:=x_j - x_{j-1}, j=1, 2, \cdots, n$. We note first that 
\begin{equation}\label{eq:L2split}
\|u-u_I\|^2_{L^2(0,1)} = \sum_{i=1}^n \int_{x_{i-1}}^{x_i} (u(x) - u_I(x))^2\, dx,  
\end{equation}
and un each interval $[x_{i-1}, x_{i}]$, we have 
\[
u(x) -u_I(x) =\int_{x_{i-1}}^x (u(s) -u_I(s))'\, ds.
\]
Thus, 
\[
\begin{aligned}
(u(x) - u_I(x))^2  & \leq  \int_{x_{i-1}}^x 1^2\, ds  \int_{x_{i-1}}^x (u'(s) - u'_I(s))^2\, ds \\ 
& \leq (x-x_{i-1})  \int_{x_{i-1}}^{x_i} (u'(s) - u'_I(s))^2\, ds.
\end{aligned}
\]
Since  $ \int_{x_{i-1}}^{x_i}  u'(s)   =u(x_i)-u(x_{i-1}) = h u'_I(s)$ we have that 
\[
\begin{aligned}
 \int_{x_{i-1}}^{x_i}  (u'(s) - u'_I(s))^2\, ds  & =  \int_{x_{i-1}}^{x_i}  (u'(s))^2\, ds  - h   (u'_I(s))^2\, ds \\ & \leq  \int_{x_{i-1}}^{x_i}  (u'(s))^2\, ds  \leq h \|u'\|^2_{\infty}. 
 \end{aligned}
\]

Combining the last two estimates,  we obtain the following result:
\begin{equation}\label{eq:L2-estim}
\int_{x_{i-1}}^{x_i} (u(x) - u_I(x))^2\, dx \leq \frac{h^3}{2}  \|u'\|^2_{\infty}. 
\end{equation}
Now, from \eqref{eq:L2split} and  \eqref{eq:L2-estim} we get the following result:
\begin{prop}\label{prop:L2-inteerp} If  $u \in H^1([0, 1]) $ with  $u' \in L^\infty([0,1])$   and  $u_I$ is the linear interpolant on a uniform mesh on $[0, 1]$, then 
\begin{equation}\label{eq:L2err-Int}
\|u-u_I\|^2_{L^2(0,1)} \leq  \frac{h^2}{2}  \|u'\|^2_{\infty}.
\end{equation}
\end{prop}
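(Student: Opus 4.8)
The plan is to obtain the global $L^2$ bound by summing the per-element estimate \eqref{eq:L2-estim} over all $n$ subintervals of the uniform mesh and then invoking the relation $nh=1$. The genuine analytic content lives at the element level and has already been assembled in the lines preceding the statement: writing $u(x)-u_I(x)=\int_{x_{i-1}}^x (u-u_I)'(s)\,ds$ on $[x_{i-1},x_i]$ and applying Cauchy--Schwarz gives the pointwise control
\[
(u(x)-u_I(x))^2 \le (x-x_{i-1})\int_{x_{i-1}}^{x_i}(u'-u_I')^2\,ds;
\]
integrating in $x$ over the element contributes the factor $\int_{x_{i-1}}^{x_i}(x-x_{i-1})\,dx = h^2/2$, while the orthogonality identity $\int_{x_{i-1}}^{x_i}(u'-u_I')^2 = \int_{x_{i-1}}^{x_i}(u')^2 - h\,(u_I')^2 \le \int_{x_{i-1}}^{x_i}(u')^2 \le h\,\|u'\|^2_{\infty}$ supplies the remaining factor $h$, which together yield \eqref{eq:L2-estim}. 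Thus the step remaining for the proposition is purely combinatorial.

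Concretely, I would insert \eqref{eq:L2-estim} into the splitting \eqref{eq:L2split} to get
\[
\|u-u_I\|^2_{L^2(0,1)} = \sum_{i=1}^n \int_{x_{i-1}}^{x_i} (u-u_I)^2\,dx \le \sum_{i=1}^n \frac{h^3}{2}\,\|u'\|^2_{\infty} = n\,\frac{h^3}{2}\,\|u'\|^2_{\infty}.
\]
Since the mesh is uniform on $[0,1]$ we have $nh=1$, i.e. $n=1/h$, so that $n\,h^3 = h^2$ and the right-hand side collapses to $\tfrac{h^2}{2}\|u'\|^2_{\infty}$, which is exactly \eqref{eq:L2err-Int}. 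No hypothesis beyond $u\in H^1([0,1])$ with $u'\in L^\infty([0,1])$ is used, which is consistent with the stated assumptions.

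The one point I would double-check most carefully is the Pythagorean identity passing from $\int (u'-u_I')^2$ to $\int (u')^2 - h\,(u_I')^2$. This rests on the fact that on each element the interpolant's constant slope $u_I'$ equals the mean of $u'$, because $u_I(x_i)-u_I(x_{i-1}) = u(x_i)-u(x_{i-1}) = \int_{x_{i-1}}^{x_i} u'$; expanding the square and using this mean property makes the cross term cancel precisely the $h\,(u_I')^2$ contribution, and nonnegativity of $h\,(u_I')^2$ then gives the inequality. Everything else is routine summation, so once \eqref{eq:L2-estim} is in hand there is no real obstacle left.
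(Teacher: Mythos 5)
Your proposal is correct and follows essentially the same route as the paper: the element-wise splitting \eqref{eq:L2split}, the Cauchy--Schwarz bound $(u(x)-u_I(x))^2 \le (x-x_{i-1})\int_{x_{i-1}}^{x_i}(u'-u_I')^2$, the Pythagorean identity coming from $u_I'$ being the mean of $u'$ on each element, and the final summation with $nh=1$. The one point you flagged for checking is indeed exactly the identity the paper relies on, and your justification of it is the same as the paper's.
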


Assuming now that $f \in C^0([0, 1])$, and  $u$ is the solution of \eqref{eq:1d-model}.  We clearly have  that the regularity assumptions  of Proposition \ref{prop:L2-inteerp}  are satisfied for the solution $u$. Thus, we obtain 

\begin{equation}\label{eq:L2err-Int2}
\|u-u_I\|_{L^2(0,1)} \leq  \frac{h}{\sqrt{2}}  \|f\|_{\infty}, 
\end{equation}
an estimate independent of $\varepsilon$. 

We note here that, as well known from the finite element approximation theory,  this inequality is not  optimal. We can get a standard estimate  $O(h^2)$  for $\|u-u_I\|_{L^2(0,1)}$, at the price of having an estimate constant that depends on  $\varepsilon$. 

First, we note that the following  Poincare Inequality
\begin{equation}\label{eq:P}
 \|w\| \leq \frac {(b-a)}{\pi} \, |w|, \text{ for all }  w \in L^2_0(a,b) \cap H^1(a,b). 
\end{equation}
can be proved using the spectral theorem for compact operators on Hilbert spaces for  the inverse of the (1d) Laplace operator with homogeneous Neumann boundary conditions. 

Next, if $u \in H^2(0,1)$ then using  $\int_{x_{i-1}}^{x_i}  u'(s)   =u(x_i)-u(x_{i-1}) = h u'_I(s)$ and the Poincare inequality \eqref{eq:P},  
\[
\begin{aligned}
 \int_{x_{i-1}}^{x_i}  (u'(s) - u'_I(s))^2\, ds  & =  \int_{x_{i-1}}^{x_i}  \left (u'(s) -  \frac{1}{h} \int_{x_{i-1}}^{x_i}  u'(s) \right)^2\, ds\\
 & \leq \frac{h^2}{\pi^2} \|u''\|_{L^2(x_{i-1}, x_i)}^2.
  \end{aligned} 
\]
Thus,
\begin{equation}\label{eq:L2-estim2}
\int_{x_{i-1}}^{x_i} (u(x) - u_I(x))^2\, dx \leq \frac{h^4}{\pi^2}  \|u''\|_{L^2(x_{i-1}, x_i)}^2,
\end{equation}
which combined with \eqref{eq:L2split} gives 

\begin{equation}\label{eq:L2err-Int4}
\|u-u_I\| \leq  \frac{h^2}{\pi}  \|u''\|_{L^2(0, 1)} 
\end{equation}
Combining with the estimate with \eqref{eq:upp} we obtain 
\begin{equation}\label{eq:L2err-Int4}
\|u-u_I\|_{L^2(0,1)} \leq  \frac{2}{\varepsilon \pi} \, h^2 \|f\|_{\infty}.
\end{equation}

\bibliographystyle{plain} 

\def\cprime{$'$} \def\ocirc#1{\ifmmode\setbox0=\hbox{$#1$}\dimen0=\ht0
  \advance\dimen0 by1pt\rlap{\hbox to\wd0{\hss\raise\dimen0
  \hbox{\hskip.2em$\scriptscriptstyle\circ$}\hss}}#1\else {\accent"17 #1}\fi}
  \def\cprime{$'$} \def\ocirc#1{\ifmmode\setbox0=\hbox{$#1$}\dimen0=\ht0
  \advance\dimen0 by1pt\rlap{\hbox to\wd0{\hss\raise\dimen0
  \hbox{\hskip.2em$\scriptscriptstyle\circ$}\hss}}#1\else {\accent"17 #1}\fi}

 \end{document}